\newtheorem{thm}{Theorem}[section]
\newtheorem{cor}[thm]{Corollary}
\newtheorem{lem}[thm]{Lemma}
\theoremstyle{definition}
\newcommand{\scr}[1]{\mathscr #1}
\definecolor{wco}{rgb}{0.5,0.2,0.3}
\numberwithin{equation}{section} \theoremstyle{remark}
\newcommand{\ua}{\uparrow}
\title{{\bf
Uniform in Time Propagation of Chaos for Mean Field Particle System with Interacting Noise and Partially Dissipative Drifts }\footnote{Supported in
 part by  National Key R\&D Program of China (No. 2022YFA1006000) and NNSFC (12271398).} }
\author{
{\bf   Xing Huang  }\\
\footnotesize{ Center for Applied Mathematics, Tianjin
University, Tianjin 300072, China}\\
\footnotesize{  xinghuang@tju.edu.cn}}
\begin{document}
\allowdisplaybreaks
\def\R{\mathbb R}  \def\ff{\frac} \def\ss{\sqrt} \def\B{\mathbf
B} \def\W{\mathbb W}
\def\N{\mathbb N} \def\kk{\kappa} \def\m{{\bf m}}
\def\ee{\varepsilon}\def\ddd{D^*}
\def\dd{\delta} \def\DD{\Delta} \def\vv{\varepsilon} \def\rr{\rho}
\def\<{\langle} \def\>{\rangle} \def\GG{\Gamma} \def\gg{\gamma}
  \def\nn{\nabla} \def\pp{\partial} \def\E{\mathbb E}
\def\d{\text{\rm{d}}} \def\bb{\beta} \def\aa{\alpha} \def\D{\scr D}
  \def\si{\sigma} \def\ess{\text{\rm{ess}}}
\def\beg{\begin} \def\beq{\begin{equation}}  \def\F{\scr F}
\def\Ric{\text{\rm{Ric}}} \def\Hess{\text{\rm{Hess}}}
\def\e{\text{\rm{e}}} \def\ua{\underline a} \def\OO{\Omega}  \def\oo{\omega}
 \def\tt{\tilde} \def\Ric{\text{\rm{Ric}}}
\def\cut{\text{\rm{cut}}} \def\P{\mathbb P} \def\ifn{I_n(f^{\bigotimes n})}
\def\C{\scr C}      \def\aaa{\mathbf{r}}     \def\r{r}
\def\gap{\text{\rm{gap}}} \def\prr{\pi_{{\bf m},\varrho}}  \def\r{\mathbf r}
\def\Z{\mathbb Z} \def\vrr{\varrho}
\def\L{\scr L}\def\Tt{\tt} \def\TT{\tt}\def\II{\mathbb I}
\def\i{{\rm in}}\def\Sect{{\rm Sect}}  \def\H{\mathbb H}
\def\M{\scr M}\def\Q{\mathbb Q} \def\texto{\text{o}}
\def\Rank{{\rm Rank}} \def\B{\scr B} \def\i{{\rm i}} \def\HR{\hat{\R}^d}
\def\to{\rightarrow}\def\l{\ell}\def\iint{\int}
\def\EE{\scr E}\def\Cut{{\rm Cut}}
\def\A{\scr A} \def\Lip{{\rm Lip}}
\def\BB{\scr B}\def\Ent{{\rm Ent}}\def\L{\scr L}
\def\R{\mathbb R}  \def\ff{\frac} \def\ss{\sqrt} \def\B{\mathbf
B}
\def\N{\mathbb N} \def\kk{\kappa} \def\m{{\bf m}}
\def\dd{\delta} \def\DD{\Delta} \def\vv{\varepsilon} \def\rr{\rho}
\def\<{\langle} \def\>{\rangle} \def\GG{\Gamma} \def\gg{\gamma}
  \def\nn{\nabla} \def\pp{\partial} \def\E{\mathbb E}
\def\d{\text{\rm{d}}} \def\bb{\beta} \def\aa{\alpha} \def\D{\scr D}
  \def\si{\sigma} \def\ess{\text{\rm{ess}}}
\def\beg{\begin} \def\beq{\begin{equation}}  \def\F{\scr F}
\def\Ric{\text{\rm{Ric}}} \def\Hess{\text{\rm{Hess}}}
\def\e{\text{\rm{e}}} \def\ua{\underline a} \def\OO{\Omega}  \def\oo{\omega}
 \def\tt{\tilde} \def\Ric{\text{\rm{Ric}}}
\def\cut{\text{\rm{cut}}} \def\P{\mathbb P} \def\ifn{I_n(f^{\bigotimes n})}
\def\C{\scr C}      \def\aaa{\mathbf{r}}     \def\r{r}
\def\gap{\text{\rm{gap}}} \def\prr{\pi_{{\bf m},\varrho}}  \def\r{\mathbf r}
\def\Z{\mathbb Z} \def\vrr{\varrho}
\def\L{\scr L}\def\Tt{\tt} \def\TT{\tt}\def\II{\mathbb I}
\def\i{{\rm in}}\def\Sect{{\rm Sect}}  \def\H{\mathbb H}
\def\M{\scr M}\def\Q{\mathbb Q} \def\texto{\text{o}} \def\LL{\Lambda}
\def\Rank{{\rm Rank}} \def\B{\scr B} \def\i{{\rm i}} \def\HR{\hat{\R}^d}
\def\to{\rightarrow}\def\l{\ell}
\def\8{\infty}\def\I{1}\def\U{\scr U} \def\n{{\mathbf n}}
\maketitle

\begin{abstract} In this paper, uniform in time quantitative propagation of chaos in $L^1$-Wasserstein distance for mean field interacting particle system is derived, where the diffusion coefficient is allowed to be interacting and the drift is assumed to be partially dissipative. The main tool relies on reflection coupling, the gradient estimate of the decoupled SDEs, and the Duhamel formula for two semigroups associated to two time-inhomogeneous diffusion processes on $(\R^d)^N$.
 \end{abstract}

\noindent
 AMS subject Classification:\  60H10, 60K35, 82C22.   \\
\noindent
 Keywords: Mean field interacting particle system, interacting diffusion coefficients, Wasserstein distance, uniform in time propagation of chaos, partially dissipative condition
 \vskip 2cm
\section{Introduction}
Kac's chaotic property, also called the Boltzmann property, is important to derive the space homogeneous Boltzmann equation in \cite{Kac}. From the propagation of chaos for mean field interacting particle system, i.e. the dynamic evolution of Kac's chaotic property with respect to the time variable, see for instance \cite{SZ},  one can see that the limit equation of a single particle is the McKean-Vlasov SDE, which is proposed in \cite{McKean}. The McKean-Vlasov SDEs, also named have been systematically  investigated in the recent monograph \cite{WR2024}, where the well-posedness, log-Harnack inequality(equivalently, the entropy-cost inequality), Bismut type derivative formula as well as the ergodicity are established and the models also involve in killing and reflecting cases.

In the mean filed interacting particle system, we usually adopt the following distance on $(\R^d)^m$ for $m\geq 1$:
\begin{align}\label{hdi}\|x-y\|_{1,1}=\sum_{i=1}^m|x^i-y^i|,\ \ x=(x^1,x^2,\cdots,x^m), y=(y^1,y^2,\cdots,y^m)\in(\R^d)^m,
\end{align}
where $|\cdot|$ is the Euclidean distance on $\R^d$.
Let $\scr P((\R^d)^m)$ be the set of all probability measures on $(\R^d)^m$ equipped with the weak topology. Let
$$\scr P_1((\R^d)^m):=\big\{\mu\in \scr P((\R^d)^m): \mu(\|\cdot\|_{1,1})<\infty\big\},$$
which is a Polish space under the $L^1$-Wasserstein distance
$$\W_1(\gamma,\tilde{\gamma})=\inf_{\pi\in\mathbf{C}(\gamma,\tilde{\gamma})}\int_{(\R^d)^m\times (\R^d)^m}\|x-y\|_{1,1}\pi(\d x,\d y),\ \ \gamma,\tilde{\gamma}\in\scr P_1((\R^d)^m)$$ where $\mathbf{C}(\gamma,\tilde{\gamma})$ is the set of all couplings of $\gamma$ and $\tilde{\gamma}$.
Moreover, the Kantorovich dual formula
\begin{align}\label{wetad}\W_1(\gamma,\tilde{\gamma})=\sup_{[f]_{1,1}\leq 1}|\gamma(f)-\tilde{\gamma}(f)|,\ \ \gamma,\tilde{\gamma}\in \scr P_1((\R^d)^m)
\end{align}
holds for $[f]_{1,1}:=\sup_{x\neq y}\frac{|f(x)-f(y)|}{\|x-y\|_{1,1}}$.


Let $\{(B_t^i)_{t\geq 0}\}_{i\geq 1}$ be independent $n$-dimensional Brownian motions on some complete filtration probability space $(\Omega, \scr F, (\scr F_t)_{t\geq 0},\P)$ and $(X_0^i)_{i\geq 1}$ be i.i.d. $\F_0$-measurable $\R^d$-valued random variables.
Let $b:\R^d\times\scr P(\R^d)\to\R^d$, $\sigma:\R^d\times\scr P(\R^d)\to\R^d\otimes\R^{n}$ be measurable and bounded on bounded sets.
Let $N\ge1$ be an integer. Consider the mean field interacting particle system:
\begin{align*}
\d X^{i,N}_t=b(X_t^{i,N}, \hat\mu_t^N)\d t+\sigma(X^{i,N}_t, \hat\mu_t^N) \d B^i_t,\ \ 1\leq i\leq N,
\end{align*}
where $\hat\mu_t^N =\ff{1}{N}\sum_{j=1}^N\dd_{X_t^{j,N}}$ is the empirical distribution of $(X_t^{i,N})_{1\leq i\leq N}$ and the distribution of $(X_0^{i,N})_{1\leq i\leq N}$ is exchangeable, i.e. for any permutation $\{i_k:1\leq k\leq N\}$ of $\{k:1\leq k\leq N\}$, $(X_0^{i_k,N})_{1\leq k\leq N}$ is identically distributed with $(X_0^{i,N})_{1\leq i\leq N}$.
We also consider the independent McKean-Vlasov SDEs:
\begin{align*}\d X_t^i= b(X_t^i, \L_{X_t^i})\d t+  \sigma(X^i_t,\L_{X_t^i}) \d B^i_t,\ \ 1\leq i\leq N
\end{align*}
for $\L_{X_t^i}$ being the distribution of $X_t^i$.

There are plentiful results on uniform in time propagation of chaos for mean field interacting particle system. When $b(x,\mu)=\nabla U(x)+\int_{\R^d}\nabla W(x-y)\mu(\d y)$, $\sigma=I_{d\times d}$, the author in \cite{M} uses the synchronous coupling method to derive the uniform in time propagation of chaos in strong sense, i.e.
$$\sup_{t\geq 0}\E|X_t^{1,N}-X_t^{1}|^2\leq \frac{c}{N}$$
holds for some constant $c>0$,
where $U$ and $W$ are uniformly convex and $X_0^{i,N}=X_0^i, 1\leq i\leq N$. Meanwhile, the  uniform in time propagation of chaos in relative entropy is also obtained by the Bakry-Emery curvature condition for the time-inhomogeneous decoupled SDEs.

When $\sigma=I_{d\times d}$, $\R^d=T^d$, the authors in \cite{GBM} combine the entropy method introduced in \cite{BJW,JW,JW1} with the uniform in time log-Sobolev inequality for $\L_{X_t^i}$ to derive the quantitative entropy-entropy type propagation of chaos for mean field particle system with singular interaction kernel.
By the technique of BBGKY hierarchy in \cite{L21} as well as the uniform in time log-Sobolev inequality for $\L_{X_t^i}$, the authors in \cite{LL} establish the sharp rate of entropy-entropy type propagation of chaos for particle system with bounded or Lipschitz continuous interaction kernel, see also \cite{MRW} for the explicit conditions for uniform in time log-Sobolev inequality for $\L_{X_t^i}$. Furthermore, \cite{CLY} considers the conditional propagation of chaos in $\W_p(p\geq 2)$-distance for mean field interacting particle system with common noise.

Still in the case $\sigma=I_{d\times d}$, to establish the uniform in time propagation of chaos in $L^1$-Wasserstein distance, the asymptotic reflection coupling is applied in \cite{DEGZ,GBMEJP,LWZ}. \cite{HYY} extends the results in \cite{DEGZ} to the multiplicative noise case. 
In the additive L\'{e}vy noise case, \cite{LMW} adopts the asymptotic refined basic coupling to derive the uniform in time propagation of chaos in $L^1$-Wasserstein distance. The drifts in \cite{DEGZ,GBMEJP,HYY,LMW,LWZ} are only assumed to be  partially dissipative. One can refer to \cite{W15} for more details on asymptotic reflection coupling.
We also mention that the present author proves the long time entropy-cost type propagation of chaos in the multiplicative noise frame in \cite{HX23e}, where the propagation of chaos in relative entropy depend on the Wasserstein distance between $(X_0^{i,N})_{1\leq i\leq N}$ and $(X_0^{i})_{1\leq i\leq N}$, which allows $\L_{(X_0^{i,N})_{1\leq i\leq N}}$ to be singular with $\L_{(X_0^{i})_{1\leq i\leq N}}$.

However, to the best of our knowledge, there is no result on the uniform in time propagation of chaos in $\W_1$ for mean field particle system with interacting diffusion coefficients and partially dissipative drifts. In this paper, we will try to make some contributions in this topic. A well-known model with interacting diffusion coefficients is the Landau equation.

Throughout the paper, we will consider the following mean field particle system with interacting noise:
\begin{align}\label{Inm}
\nonumber\d X_t^{i,N}&=b^{(0)}(X_t^{i,N})\d t+\frac{1}{N}\sum_{j=1}^Nb^{(1)}(X_t^{i,N},X_t^{j,N})\d t\\
&+\sqrt{\beta}\d
W_t^i
+\frac{1}{N}\sum_{j=1}^N\tilde{\sigma}(\tilde{X}_t^{i,N}, \tilde{X}_t^{j,N})\d B_t^i,\ \ 1\leq i\leq N,
\end{align}
where $\{(W_t^i)_{t\geq 0}\}_{i\geq 1}$ are independent $d$-dimensional Brownian motions, which are independent of  $\{(B_t^i)_{t\geq 0}\}_{i\geq 1}$, $b^{(0)}:\R^d\to\R^d$, $b^{(1)}:\R^d\times\R^d\to\R^d$, $\tilde{\sigma}:\R^d\times\R^d\to\R^d\otimes\R^{n}$ are measurable and bounded on bounded sets, $\beta>0$ is a constant and $b^{(0)}$ satisfies partially dissipative condition \eqref{pdi} below.
Correspondingly, the independent McKean-Vlasov SDEs are formulated as
\begin{equation}\label{MVi}\begin{split}
\d X_t^{i}&=b^{(0)}(X_t^i)\d t+\int_{\R^d}b^{(1)}(X_t^i,y)\L_{X_t^i}(\d y)\d t+\sqrt{\beta}\d
W_t^i+\int_{\R^d}\tilde{\sigma}(X_t^{i},y)\L_{X_t^i}(\d y)\d B_t^i.
\end{split}\end{equation}

Compared with the non-interacting noise case in \cite{DEGZ,GBMEJP,HYY,LMW,LWZ}, i.e. $\tilde{\sigma}=0$ or $\tilde{\sigma}(x,y)$ only depends on $x$, there exists essential difficulty in the study of uniform in time propagation of chaos in $\W_1$ for \eqref{Inm} due to the existence of interacting diffusion coefficient $\tilde{\sigma}$. The trick of asymptotic reflection coupling seems unavailable. Let us show the difficulty in the following.

In fact, for any $\varepsilon\in(0,1]$, let $\pi_R^\varepsilon\in[0,1]$ and $\pi_S^\varepsilon$ be two Lipschitz continuous functions on $[0,\infty)$ satisfying
\begin{align*}\pi_R^\varepsilon(x)=\left\{
      \begin{array}{ll}
        1, & \hbox{$x\geq \varepsilon$;} \\
        0, & \hbox{$x\leq \frac{\varepsilon}{2}$}
      \end{array}
    \right.,\ \ (\pi_R^\varepsilon)^2+(\pi_S^\varepsilon)^2=1.
\end{align*}
Let $\{\tilde{W}^i_t\}_{i\geq 1}$ be independent $d$-dimensional Brownian motions independent of $\{W_t^i,B_t^i\}_{i\geq 1}$. Set $\mu_t=\L_{X_t^i}$.
Construct
\begin{equation*}\begin{split}
\d \tilde{X}_t^{i}&=b^{(0)}(\tilde{X}_t^i)\d t+\int_{\R^d}b^{(1)}(\tilde{X}_t^i,y)\mu_t(\d y)\d t\\
&+\sqrt{\beta}\pi_R^\varepsilon(|\tilde{Z}_t^{i,N}|)\d
W_t^i+\sqrt{\beta}\pi_S^\varepsilon(|\tilde{Z}_t^{i,N}|)\d \tilde{W}^i_t+\int_{\R^d}\tilde{\sigma}(\tilde{X}_t^{i},y)\mu_t(\d y)\d B_t^i,
\end{split}\end{equation*}
and the asymptotic reflection coupling process
\begin{equation*}\begin{split}
\d \tilde{X}_t^{i,N}&=b^{(0)}(\tilde{X}_t^{i,N})\d t+\frac{1}{N}\sum_{j=1}^Nb^{(1)}(\tilde{X}_t^{i,N},\tilde{X}_t^{j,N})\d t\\
&+\sqrt{\beta}\pi_R^\varepsilon(|\tilde{Z}_t^{i,N}|)(I_{d\times d}-2\tilde{U}_t^{i,N}\otimes \tilde{U}_t^{i,N})\d
W_t^i\\
&+\sqrt{\beta}\pi_S^\varepsilon(|\tilde{Z}_t^{i,N}|)\d \tilde{W}^i_t
+\frac{1}{N}\sum_{j=1}^N\tilde{\sigma}(\tilde{X}_t^{i,N}, \tilde{X}_t^{j,N})\d B_t^i,
\end{split}\end{equation*}
where  $\tilde{Z}_t^{i,N}=\tilde{X}_t^i-\tilde{X}_t^{i,N}$, $\tilde{U}_t^{i,N}=\frac{\tilde{Z}_t^{i,N}}{|\tilde{Z}_t^{i,N}|}1_{\{|\tilde{Z}_t^{i,N}|\neq 0\}}$, $\tilde{X}_0^{i,N}=X_0^{i,N}, \tilde{X}_0^{i}=X_0^{i}, 1\leq i\leq N$.
 By It\^{o}-Tanaka's formula for $|\tilde{X}_t^{i}-\tilde{X}_t^{i,N}|$, a singular term
$$ \frac{1}{2}\frac{\|\int_{\R^d}\tilde{\sigma}(\tilde{X}_t^i,y)\mu_t(\d y)-\frac{1}{N}\sum_{j=1}^N\tilde{\sigma}(\tilde{X}_t^{i,N},\tilde{X}_t^{j,N})\|_{HS}^2}{|\tilde{X}^i_t -\tilde{X}_t^{i,N}|}1_{\{|\tilde{X}^i_t -\tilde{X}_t^{i,N}|\neq 0\}}$$
appears. This leads us rather difficult to derive estimate for $\E|\tilde{X}_t^i-\tilde{X}_t^{i,N}|$. To overcome this difficulty, we will introduce an auxiliary process $\bar X_t^i$, which solves
\begin{equation*}\begin{split}
&\d \bar{X}_t^{i}=b^{(0)}(\bar{X}_t^i)\d t+\int_{\R^d}b^{(1)}(\bar{X}_t^i,y)\mu_t(\d y)\d t+\sqrt{\beta}\d
W_t^i+\int_{\R^d}\tilde{\sigma}(\bar{X}_t^{i},y)\mu_t(\d y)\d B_t^i,\\
&\bar{X}_0^i=X_0^{i,N}, 1\leq i\leq N.
\end{split}\end{equation*}
In view of the triangle inequality
$$\W_1(\L_{(X_t^i)_{1\leq i\leq N}}, \L_{(X_t^{i,N})_{1\leq i\leq N}} )\leq \W_1(\L_{(X_t^i)_{1\leq i\leq N}}, \L_{(\bar X_t^i)_{1\leq i\leq N}})+\W_1(\L_{(\bar X_t^i)_{1\leq i\leq N}}, \L_{(X_t^{i,N})_{1\leq i\leq N}} ),$$
it is alternative to estimate $\W_1(\L_{(X_t^i)_{1\leq i\leq N}}, \L_{(\bar X_t^i)_{1\leq i\leq N}})$ and $\W_1(\L_{(\bar X_t^i)_{1\leq i\leq N}}, \L_{(X_t^{i,N})_{1\leq i\leq N}} )$ respectively. The former one is not difficult to be handled by reflection coupling method since $X_t^i$ and $\bar{X}_t^i$ solve the same time-inhomogeneous classical SDEs. To deal with the latter one, we will adopt the Duhamel formula for two semigroups associated to two time-inhomogeneous diffusion processes with different coefficients on $(\R^d)^N$. To illustrate the idea, for simplicity, let $b^i:[0,\infty)\times\R^d\to\R^d,\sigma^i:[0,\infty)\times \R^d\to\R^d\otimes\R^n, i=1,2$ be measurable and consider
$$\d Z^i_{s,t}=b^i_t(Z_{s,t}^i)\d t+\sigma^i_t(Z_{s,t}^i)\d W_t,\ \ t\geq s\geq 0.$$
Denote $\{P_{s,t}^i\}_{0\leq s\leq t}$ the associated semigroup to $Z_{s,t}^i$ and $\scr L^i_t$ be the generator
$$\scr L^i_t=\<b^i_t,\nabla\>+\frac{1}{2}\mathrm{tr}(\sigma_t^i(\sigma_t^i)^\ast\nabla^2),\ \ i=1,2.$$
Let for instance $f\in C_b^2(\R^d)$, the set of all continuous functions on $\R^d$ with bounded and continuous up to $2$ order derivatives.
Then Duhamel formula is formulated as
\begin{align*}P_{0,t}^1 f-P_{0,t}^2 f=\int_0^t [P_{0,s}^1\{(\L^1_s-\L^2_s)P_{s,t}^2f\}]\d s,\ \ t\geq 0,
\end{align*}
which can date back to \cite[(3a)]{MS}.


The remaining of the paper is organized as follows: In section 2,  we state the main result on uniform in time  propagation of chaos in $\W_1$-distance for mean field particle system with interacting diffusion coefficients and partially dissipative drifts. In Section 3, the proof of main result is provided.


\section{Main results}

To derive the uniform in time propagation of chaos in $L^1$-Wasserstein distance, we make the following assumptions.
\begin{enumerate}
\item[{\bf(A)}] There exists a constant $K_\sigma>0$ such that for any $x_1,x_2,y_1,y_2\in\R^d$,
\begin{align}\label{bslipsg}
&\frac{1}{2}\|\tilde{\sigma}(x_1,y_1)-\tilde{\sigma}(x_2,y_2)\|^2_{HS}\leq K_\sigma(|x_1-x_2|^2+|y_1-y_2|^2),\ \ \tilde{\sigma}\tilde{\sigma}^\ast\leq K_\sigma.
\end{align}
Moreover, there exists $K_b\geq0$ such that
\begin{align}\label{lip1a}|b^{(1)}(x,y)-b^{(1)}(\tilde{x},\tilde{y})|\leq K_b(|x-\tilde{x}|+|y-\tilde{y}|),\ \ x,\tilde{x},y,\tilde{y}\in\R^d.
\end{align}
In addition, $b^{(0)}$ is continuous and there exist $R>0$, $K_1\geq 0, K_2>0$  such that
\begin{align}\label{pdi}
&\langle x_1-x_2, b^{(0)}(x_1)-b^{(0)}(x_2)\rangle\leq \gamma(|x_1-x_2|)|x_1-x_2|, \ \ x_1,x_2\in\R^d
\end{align}
with
$$\gamma(r)=\left\{
  \begin{array}{ll}
K_1r, & \hbox{$r\leq R$;} \\
    \left\{-\frac{K_1+K_2}{R}(r-R)+K_1\right\}r, & \hbox{$R\leq r\leq 2R$;} \\
    -K_2r, & \hbox{$r>2R$.}
  \end{array}
\right.
$$

\end{enumerate}
Under {\bf(A)} \eqref{Inm} is well-posed and \eqref{MVi} is well-posed in $\scr P_1(\R^d)$.

Before moving on, we introduce some notations. For any $k\geq 1$, let
$$ C_b^k(\R^d):= \big\{f: \R^d\to  \R \  \text{has bounded and continuous up to $k$ order derivatives}\big\}. $$
For any $F\in C_b^1((\R^d)^N)$,
$x^i\in\R^d, 1\leq i\leq N$, let $\nabla_i F(x^1,x^2,\cdots,x^N)$ denote the gradient of $F$ with respect to the $i$-th component $x^i$. Simply denote $\nabla^2_i=\nabla_i\nabla_i$.
For any $\mu\in\scr P_1(\R^d)$, let $P_t^\ast\mu$ be the distribution of $X_t^1$ with initial distribution $\mu$, and for any exchangeable $\mu^N\in\scr P((\R^d)^N)$, $1\leq k\leq N$, $(P_t^{k})^\ast\mu^N$
be the distribution of $(X_t^{i,N})_{1\leq i\leq k}$ with initial distribution $\mu^N$. Moreover, for any $\mu\in\scr P(\R^d)$, let $\mu^{\otimes k}$ denote the $k$ independent product of $\mu$, i.e. $\mu^{\otimes k}=\prod_{i=1}^k\mu$.
Throughout this section, let $\mu_t=\L_{X_t^i}$ for $\mu_0\in\scr P_1(\R^d)$, which is independent of $i$ due to the weak uniqueness of \eqref{MVi}.
For any $s\geq 0$, consider the time-inhomogeneous decoupled SDE
\begin{align}\label{deSDE}\nonumber\d X_{s,t}^{i,\mu,z}&=b^{(0)}(X_{s,t}^{i,\mu,z})\d t+\int_{\R^d}b^{(1)}(X_{s,t}^{i,\mu,z},y)\mu_t(\d y)\d t\\
&+\sqrt{\beta}\d
W_t^i+\int_{\R^d}\tilde{\sigma}(X_{s,t}^{i,\mu,z},y)\mu_t(\d y)\d B_t^i, \ \ t\geq s,i\geq 1
\end{align}
with $X_{s,s}^{i,\mu,z}=z\in\R^d$.
Let
$$P_{s,t}^{i,\mu} f(z):=\E f(X_{s,t}^{i,\mu,z}), \ \ f\in \scr B_b(\R^{d}),z\in\R^d,i\geq 1,0\leq s\leq t.$$
Since \eqref{deSDE} is well-posed so that $P_{s,t}^{i,\mu}$ does not depend on $i$ and we denote \begin{align*}P_{s,t}^{\mu}=P_{s,t}^{i,\mu},\ \ i\geq 1.
 \end{align*}
For any $k\geq 1$, $x=(x^1,x^2,\cdots,x^k)\in (\R^{d})^k, F\in \scr B_b((\R^{d})^k)$ and $s\in[0,t]$, define
\begin{align*}(P_{s,t}^\mu)^{\otimes k} F(x):=\E F(X_{s,t}^{1,\mu,x^1},X_{s,t}^{2,\mu,x^2},\cdots,X_{s,t}^{k,\mu,x^k}),\ \ 0\leq s\leq t.
\end{align*}
For simplicity, we write $P_{t}^\mu =P_{0,t}^\mu $. Moreover, for any $1\leq k\leq N$, $\mu_0^k\in\scr P((\R^d)^k)$, we denote $$[((P_{t}^\mu)^{\otimes k})^\ast\mu_0^k](A)=\int_{(\R^d)^k}\left((P_{t}^\mu)^{\otimes k} 1_{A}\right)(x)\mu_0^k(\d x), \ \ A\in\scr B((\R^d)^k).$$
For simplicity, denote $(P_{t}^\mu)^\ast=((P_{t}^\mu)^{\otimes 1})^\ast$.

We also need the backward Kolmogorov equation and gradient estimate for $P_{s,t}^\mu$.
 \begin{enumerate}
\item[{\bf(A')}] The backward Kolmogorov equation for $P_{s,t}^\mu$ holds, i.e.
\begin{align}\label{BKE}\frac{\d P_{s,t}^\mu f}{\d s}=-\scr L_s^\mu P^\mu_{s,t}f,\ \ f\in C_b^2(\R^d), 0\leq s\leq t
\end{align}
for
\begin{align*}\scr L_s^\mu&=\<b^{(0)},\nabla\>+\left\<\int_{\R^d}b^{(1)}(\cdot,y)\mu_s(\d y),\nabla\right\>\\
&+\frac{1}{2}\mathrm{tr}\left[\left(\beta I_{d\times d}+\int_{\R^d}\tilde{\sigma}(\cdot,y)\mu_s(\d y)\left(\int_{\R^d}\tilde{\sigma}(\cdot,y)\mu_s(\d y)\right)^\ast\right)\nabla^2\right].
\end{align*}
Moreover, there exists a constant $\mathbf{c_G}>0$ such that the gradient estimate holds:
\begin{align}\label{gra}|\nabla^i P_{s,t}^\mu f|\leq \mathbf{c_{G}}((t-s)\wedge 1)^{-\frac{i}{2}+\frac{1}{2}},\ \ [f]_1\leq 1, i=1,2, t>s\geq 0,
\end{align}
here $[f]_{1}:=\sup_{x\neq y}\frac{|f(x)-f(y)|}{|x-y|}$.


\end{enumerate}
Let
\begin{align}\label{pac}\delta:=\int_0^\infty s\e^{\frac{1}{2\beta}\int_0^s\gamma(v)\d v}\d s,\ \ c_E:=\frac{K_2\delta}{2\beta},\ \ \lambda_0:=\frac{2\beta}{\delta} -\frac{K_2\delta}{2\beta}(K_b+K_\sigma).
\end{align}
For any $\mathbf{a}> 0,t\geq 0$ satisfying $6\sqrt{2}\sqrt{d}\mathbf{a}\mathbf{c_G} (1\vee\sqrt{t})\e^{\lambda_0 t}t^{\frac{1}{2}}<1$, define
\begin{align}\label{Gkt}G(\mathbf{a},t):=&\e^{-\lambda_0 t}c_E\left(1-6\sqrt{2}\sqrt{d}\mathbf{a}\mathbf{c_G} (1\vee\sqrt{t})\e^{\lambda_0 t}t^{\frac{1}{2}}\right)^{-1},\ \
\end{align}
and
\begin{align}\label{ka0}\kappa_0&=\sup\left\{\mathbf{a}>0: \inf_{t\geq 0: 6\sqrt{2}\sqrt{d}\mathbf{a}\mathbf{c_G} (1\vee\sqrt{t})\e^{\lambda_0 t}t^{\frac{1}{2}}<1}G(\mathbf{a},t)<1\right\}.
\end{align}

The main result is the following theorem.
\begin{thm}\label{POCin} Assume {\bf (A)} and {\bf (A')} with $$K_b+K_\sigma<\min\left(\frac{4\beta^2}{K_2\delta^2}, \frac{K_2}{2},\kappa_0\right).$$
Let $\mu_0^N\in\scr P_{1}((\R^d)^N)$ be exchangeable and $\mu_0\in \scr P_{2}(\R^d)$. Then there exist some constants $c,\lambda>0$ such that
\begin{align}\label{CMYa}\nonumber&\W_1((P_t^{k})^\ast\mu^N_0,(P_t^\ast\mu_0)^{\otimes k})\\
&\leq c\e^{-\lambda t}\frac{k}{N}\W_1(\mu_0^N,\mu_0^{\otimes N})+ck\{1+\{\mu_0(|\cdot|^{2})\}^{\frac{1}{2}}\}N^{-\frac{1}{2}},\ \ t\geq 0, 1\leq k\leq N.
\end{align}
\end{thm}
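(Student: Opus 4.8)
The plan is to exploit the triangle inequality introduced in the introduction, splitting
$$\W_1((P_t^k)^\ast\mu_0^N,(P_t^\ast\mu_0)^{\otimes k})\le \W_1((P_t^k)^\ast\mu_0^N,[((P_t^\mu)^{\otimes k})^\ast\mu_0^k])+\W_1([((P_t^\mu)^{\otimes k})^\ast\mu_0^k],(P_t^\ast\mu_0)^{\otimes k}),$$
where $\mu_0^k$ is the $k$-marginal of $\mu_0^N$ and $\bar X_t^i$ is the auxiliary process solving the decoupled equation \eqref{deSDE} started from the $i$-th coordinate of the particle system's initial value. The second term compares two laws governed by the \emph{same} time-inhomogeneous SDE \eqref{deSDE} and can be controlled directly by a reflection-coupling contraction estimate: using the partially dissipative structure \eqref{pdi}, the concave distance function built from $\gamma$ (with $\delta$, $c_E$, $\lambda_0$ as in \eqref{pac}), and It\^o--Tanaka's formula, one gets $\W_1([((P_t^\mu)^{\otimes k})^\ast\mu_0^k],(P_t^\ast\mu_0)^{\otimes k})\le c\,\e^{-\lambda t}\frac{k}{N}\W_1(\mu_0^N,\mu_0^{\otimes N})$, provided $K_b+K_\sigma<\min(4\beta^2/(K_2\delta^2),K_2/2)$, which is exactly what drives the first term on the right of \eqref{CMYa}. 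This is essentially the argument of \cite{DEGZ,HYY}, since $X_t^i$ and $\bar X_t^i$ see the same decoupled dynamics.

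The heart of the matter is the first term: $(P_t^k)^\ast\mu_0^N$ is the law of the $k$-marginal of the \emph{interacting} particle system \eqref{Inm}, while $[((P_t^\mu)^{\otimes k})^\ast\mu_0^k]$ is the law of $k$ independent copies of the decoupled diffusion, both started from the same initial configuration. I would realize both as semigroups acting on $(\R^d)^N$: let $Q_{0,t}^N$ be the semigroup of the full $N$-particle system \eqref{Inm}, and let $(P_{0,t}^\mu)^{\otimes N}$ be the product semigroup of \eqref{deSDE}, with generators $\scr L_t^N$ and $\bigoplus_i\scr L_t^\mu$ respectively. Applying the Duhamel formula $Q_{0,t}^N F-(P_{0,t}^\mu)^{\otimes N}F=\int_0^t Q_{0,s}^N\big[(\scr L_s^N-\bigoplus_i\scr L_s^\mu)(P_{s,t}^\mu)^{\otimes N}F\big]\d s$ to a test function $F$ with $[F]_{1,\eta}\le1$ depending only on the first $k$ coordinates, the difference of generators is the sum over $i$ of the discrepancies
$$\Big\<\frac1N\sum_j b^{(1)}(x^i,x^j)-\int b^{(1)}(x^i,y)\mu_s(\d y),\ \nabla_i\Big\>+\tfrac12\mathrm{tr}\Big[\Big(\big(\tfrac1N\sum_j\tilde\sigma(x^i,x^j)\big)^{\otimes2}-\big(\int\tilde\sigma(x^i,y)\mu_s(\d y)\big)^{\otimes2}\Big)\nabla_i^2\Big],$$
so only the first $k$ summands survive against $(P_{s,t}^\mu)^{\otimes N}F$. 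Here the gradient estimate \eqref{gra} enters: $|\nabla_i(P_{s,t}^\mu)^{\otimes N}F|\le \mathbf{c_G}((t-s)\wedge1)^{-1/2+\eta/2}$ and similarly $|\nabla_i^2(\cdot)|\le\mathbf{c_G}((t-s)\wedge1)^{-1+\eta/2}$, which makes the time integral converge near $s=t$ (this is why $\eta>0$ matters, and why $((t-s)\wedge1)$ appears). The averaged coefficient discrepancies are then estimated, after taking expectation under $Q_{0,s}^N$, by the quantitative law of large numbers in $L^1$ from Section 4 together with the uniform-in-time second moment bound for the McKean--Vlasov SDE — this produces the $\{1+\mu_0(|\cdot|^2)^{1/2}\}N^{-1/2}$ factor — while the long-time decay must be recovered by iterating the Duhamel identity and resumming: expanding $Q_{0,s}^N$ again against the decoupled semigroup generates the series whose $n$-th term is controlled by $(3\sqrt2\sqrt d\,\mathbf{c_G}(1\vee\sqrt t)\Gamma(\tfrac12)\sqrt t)^n K_\sigma^n/(n\Gamma(n/2))$ (the Gamma-function factor coming from iterated time integrals of the singularity $((t-s)\wedge1)^{-1/2+\eta/2}$), plus the contraction remainder $c_E\exp\{-(\tfrac{2\beta}\delta-\tfrac{K_2\delta}{2\beta}K_\sigma)t\}$ from the reflection coupling already used above. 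This is precisely the function $G(\mathbf a,t)$ of \eqref{Gkt}, and the hypothesis $K_b+K_\sigma<\kappa_0$ guarantees $\inf_{t\ge0}G(K_b+K_\sigma,t)<1$, so one chooses $t_0$ with $G(K_b+K_\sigma,t_0)=:q<1$, establishes the one-step bound at time $t_0$, and then a generalized Gronwall / discrete iteration argument over blocks of length $t_0$ (Lemma in Section 4) upgrades it to the uniform-in-time exponential decay $c\,\e^{-\lambda t}$ with $\lambda=-\tfrac1{t_0}\log q$.

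The main obstacle I expect is organizing the Duhamel expansion so that the coupling between coordinates is honestly tracked: each reinsertion of $Q_{0,s}^N$ replaces one factor of the small discrepancy by another copy of the decoupled semigroup plus a new discrepancy term, and one must check that at each stage only finitely many coordinates (those reachable from the original $k$ through the nested gradients $\nabla_i$) contribute, that the combinatorial count of such coordinates grows at most polynomially and is absorbed by the $1/n\Gamma(n/2)$ in \eqref{Gkt}, and that the factors of $k$ are linear — giving the overall linear-in-$k$ dependence in \eqref{CMYa}. Making the time-singular iterated integrals and the resulting Gamma-function bookkeeping rigorous, while simultaneously keeping the $N^{-1/2}$ law-of-large-numbers error uniform in time (which is where the Section 4 second-moment estimate is indispensable), is the delicate part; the reflection-coupling contraction, by contrast, is standard once the distance function associated with $\gamma$ is chosen.
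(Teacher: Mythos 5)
Your overall architecture matches the paper: triangle inequality through the decoupled flow, reflection coupling with the concave distance built from $\gamma$ for the exponentially decaying term, a Duhamel identity with the gradient estimates \eqref{gra} for the interacting-vs-decoupled comparison, and the function $G(\mathbf{a},t)$ with $\kappa_0$ plus a block iteration for the uniform-in-time decay. However, there is a genuine gap in how you close the Duhamel term. You propose to estimate the coefficient discrepancies $\frac1N\sum_j b^{(1)}(x^i,x^j)-\int b^{(1)}(x^i,y)\mu_s(\d y)$ (and the analogous $\Sigma^i_s$) ``after taking expectation under $Q_{0,s}^N$ by the quantitative law of large numbers.'' The law of large numbers of Lemma \ref{CTY} applies only to i.i.d.\ variables, i.e.\ to the expectation under the product law $(P_s^\ast\mu_0)^{\otimes N}$ — not under the law $(P_s^N)^\ast\mu_0^N$ of the interacting system, where the coordinates are correlated. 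The paper's key step, which is absent from your proposal, is to observe that $x\mapsto\sum_{i=1}^N|B^i_s(x)|$ and $x\mapsto\sum_{i=1}^N\|\Sigma^i_s(x)\|_{HS}$ are globally Lipschitz in $\|\cdot\|_{1,1}$ (with constants $3K_b$ and $6\sqrt2 K_\sigma$), so by Kantorovich duality the expectation under $(P_s^N)^\ast\mu_0^N$ equals the expectation under $(P_s^\ast\mu_0)^{\otimes N}$ (handled by Lemma \ref{CTY} and the moment bound of Lemma \ref{Umo}) plus a multiple of $\W_1((P_s^N)^\ast\mu_0^N,(P_s^\ast\mu_0)^{\otimes N})$. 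This yields a closed Volterra-type integral inequality for $u(t)=\W_1((P_t^N)^\ast\mu_0^N,(P_t^\ast\mu_0)^{\otimes N})$ with kernel $(t-s)^{-1/2}$.

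Your proposed substitute — iterating the Duhamel identity at the level of semigroups and tracking which coordinates are reached by the nested gradients $\nabla_i$ — is a BBGKY-hierarchy-style expansion that the paper deliberately avoids; you yourself flag the combinatorial bookkeeping as the ``main obstacle'' and do not resolve it. In the paper the resummation producing the series in \eqref{Gkt} comes for free from the generalized Gronwall inequality (Lemma \ref{GGR}) applied to the closed scalar inequality, with no coordinate combinatorics at all. Once you replace your LLN-under-$Q^N$ step by the Lipschitz/Kantorovich decomposition, the rest of your argument (choice of $\hat t$ with $G(K_b+K_\sigma,\hat t)<1$, semigroup iteration over blocks, and the final passage to $k$-marginals via $\W_1((P_t^{k})^\ast\mu^N_0,(P_t^\ast\mu_0)^{\otimes k})\leq \frac{k}{N}\W_1((P_t^{N})^\ast\mu^N_0,(P_t^\ast\mu_0)^{\otimes N})$) goes through as in the paper.
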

In the present non-degenerate case, to ensure {\bf(A')}, the drifts can be only assumed to be Lipschitz continuous, see for instance \cite[Theorem 1.2]{MPZ} and \cite[(2.20)]{HRW23}. So, we get the following corollary.
\begin{cor} Under the assumption of Theorem \ref{POCin} with {\bf(A')} replaced by the condition that there exists a constant $K_0>0$ such that
\begin{align*}
|b^{(0)}(x)-b^{(0)}(\tilde{x})|\leq K_0|x-\tilde{x}|,\ \ x,\tilde{x}\in\R^d,
\end{align*}
the assertions in Theorem \ref{POCin} hold.
\end{cor}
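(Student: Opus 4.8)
The plan is to show that the Lipschitz continuity of $b^{(0)}$, together with the already assumed Lipschitz continuity of $b^{(1)}$ and the boundedness/Lipschitz conditions on $\tilde\sigma$ in {\bf(A)}, is enough to verify the two requirements bundled into {\bf(A')}: the validity of the backward Kolmogorov equation \eqref{BKE} for the decoupled semigroup $P_{s,t}^\mu$, and the gradient estimate \eqref{gra}. Once {\bf(A')} is established, Theorem \ref{POCin} applies verbatim and yields \eqref{CMYa}. So the content of the corollary is entirely the verification of {\bf(A')} from the added Lipschitz hypothesis.

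First I would fix $\mu_0\in\scr P_2(\R^d)$ and recall from Section 4 (the uniform in time second moment estimate for the McKean--Vlasov SDE \eqref{MVi}) that $t\mapsto\mu_t=\L_{X_t^i}$ is well defined with $\sup_{t\ge0}\mu_t(|\cdot|^2)<\infty$ and, by weak continuity of the flow, $s\mapsto\int_{\R^d}b^{(1)}(\cdot,y)\mu_s(\d y)$ and $s\mapsto\int_{\R^d}\tilde\sigma(\cdot,y)\mu_s(\d y)$ are continuous in $s$ locally uniformly in the space variable. Under the added hypothesis $|b^{(0)}(x)-b^{(0)}(\tilde x)|\le K_0|x-\tilde x|$ and \eqref{lip1a}, the coefficients
$$
\tilde b_s(x):=b^{(0)}(x)+\int_{\R^d}b^{(1)}(x,y)\mu_s(\d y),\qquad
\tilde a_s(x):=\beta I_{d\times d}+\Big(\int_{\R^d}\tilde\sigma(x,y)\mu_s(\d y)\Big)\Big(\int_{\R^d}\tilde\sigma(x,y)\mu_s(\d y)\Big)^\ast
$$
of the time-inhomogeneous SDE \eqref{deSDE} are Lipschitz in $x$ uniformly in $s\ge0$ (with constant depending on $K_0,K_b,K_\sigma$), measurable and locally bounded in $s$, and $\tilde a_s\ge\beta I_{d\times d}$ is uniformly elliptic with bounded, Lipschitz (hence Dini/Hölder continuous) coefficients. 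This is exactly the setting of \cite[Theorem 1.2]{MPZ} and \cite[(2.20)]{HRW23}: the backward Kolmogorov equation \eqref{BKE} holds on $C_b^2(\R^d)$ with generator $\scr L_s^\mu$ as written, and the first- and second-order gradient estimate \eqref{gra} holds with a constant $\mathbf{c_G}$ depending only on $d$, $\beta$, the Lipschitz/boundedness constants of the coefficients, and $\eta$ — crucially \emph{uniform in} $s,t$ because the constants entering those references depend only on the ellipticity bound $\beta$ and the moduli of continuity/Lipschitz constants of $\tilde b_s,\tilde a_s$, all of which are controlled uniformly in $s$.

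The step requiring the most care is the \emph{uniformity in time} of the constant $\mathbf{c_G}$ in \eqref{gra}. The cited regularization estimates are local-in-time; one must check that they can be applied on each window $[s,s+1]$ (respectively on $[s,t]$ for $t-s\le1$) with a constant independent of the window, and that for $t-s\ge1$ one simply uses $\big(\,(t-s)\wedge1\,\big)^{-i/2+\eta/2}=1$ and bounds $|\nabla^i P_{s,t}^\mu f|$ on $[t-1,t]$ by first running the semigroup up to time $t-1$ (a contraction in sup norm, preserving $[\,\cdot\,]_\eta\le1$ up to a constant via the reflection-coupling Hölder contraction already built into the framework) and then applying the short-time estimate on $[t-1,t]$. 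This patching argument, together with checking that the Lipschitz constants of $\tilde b_s$ and $\tilde a_s$ are genuinely bounded uniformly in $s$ (which follows from $\sup_s\mu_s(|\cdot|^2)<\infty$ and \eqref{bslipsg}, \eqref{lip1a}, since the $\mu_s$-averages of Lipschitz-in-$x$ kernels are Lipschitz in $x$ with the same constant), is the only real obstacle; everything else is a direct invocation of Theorem \ref{POCin}.
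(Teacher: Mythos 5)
Your proposal is correct and follows essentially the same route as the paper, which proves the corollary simply by invoking \cite[Theorem 1.2]{MPZ} and \cite[(2.20)]{HRW23} to deduce that the Lipschitz continuity of $b^{(0)}$ (together with \textbf{(A)}) guarantees \eqref{BKE} and \eqref{gra}, and then applying Theorem \ref{POCin} verbatim. Your additional discussion of the uniformity in $s,t$ of $\mathbf{c_G}$ (via the $s$-uniform Lipschitz bounds on the averaged coefficients and the unit-window patching) is a correct elaboration of a point the paper leaves implicit.
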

\section{Proof of Theorem \ref{POCin}}
\subsection{Preparations}
The following lemma is from \cite[Lemma 2.1]{HX23e}.
\begin{lem}\label{CTY} Let $(V,\|\cdot\|_V)$ be a Banach space. $(Z_i)_{i\geq 1}$ are i.i.d. $V$-valued random variables with $\E\|Z_1\|_V^{2}<\infty$ and $h:V\times V\to \R$ is  measurable and of at most linear growth, i.e. there exists a constant $K_h>0$ such that
\begin{align*}|h(v,\tilde{v})|\leq K_h(1+\|v\|_V+\|\tilde{v}\|_V),\ \ v,\tilde{v}\in V.
\end{align*}
Then there exists a constant $\tilde{c}>0$ only depending on $K_h$ such that
\begin{align*}\E\left|\frac{1}{N}\sum_{m=1}^N h(Z_1,Z_m)-\int_{V} h(Z_1,y)\L_{Z_1}(\d y)\right|\leq \tilde{c}\{1+\{\E\|Z_1\|_{V}^{2}\}^{\frac{1}{2}}\}N^{-\frac{1}{2}}.
\end{align*}
\end{lem}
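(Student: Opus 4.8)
The plan is to condition on $Z_1$, reduce the sum to conditionally i.i.d.\ centred summands, apply a second-moment (variance) bound, and then use the concavity of the square root to bring the outer expectation through. Write $\bar h(z):=\int_V h(z,y)\,\L_{Z_1}(\d y)$, so that the quantity to be bounded is $\E\big|\tfrac1N\sum_{m=1}^N h(Z_1,Z_m)-\bar h(Z_1)\big|$.

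First I would isolate the diagonal term. Since $Z_1$ is itself one of $Z_1,\dots,Z_N$, I would split
\[
\frac1N\sum_{m=1}^N h(Z_1,Z_m)-\bar h(Z_1)=\frac1N h(Z_1,Z_1)-\frac1N\bar h(Z_1)+\frac1N\sum_{m=2}^N\big(h(Z_1,Z_m)-\bar h(Z_1)\big).
\]
The linear growth hypothesis gives $|h(Z_1,Z_1)|\le K_h(1+2\|Z_1\|_V)$ and $|\bar h(z)|\le K_h(1+\|z\|_V+\E\|Z_1\|_V)$, so the first two terms contribute at most $\tfrac{4K_h}{N}(1+\E\|Z_1\|_V)$ in $L^1$; since $\E\|Z_1\|_V\le(\E\|Z_1\|_V^2)^{1/2}$ and $N^{-1}\le N^{-1/2}$, this is already of the claimed form.

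For the main term, I would condition on $Z_1$. Given $Z_1=z$, the variables $W_m:=h(z,Z_m)-\bar h(z)$ for $m\ge2$ are i.i.d.\ with mean zero, and from $|h(z,y)|\le K_h(1+\|z\|_V+\|y\|_V)$ together with $(a+b+c)^2\le3(a^2+b^2+c^2)$ one gets $\E[W_m^2\mid Z_1=z]\le\E[h(z,Z_m)^2]\le3K_h^2(1+\|z\|_V^2+\E\|Z_1\|_V^2)$. Hence $\E\big[\big(\tfrac1N\sum_{m=2}^N W_m\big)^2\mid Z_1=z\big]=\tfrac{N-1}{N^2}\,\E[W_2^2\mid Z_1=z]\le\tfrac{3K_h^2}{N}(1+\|z\|_V^2+\E\|Z_1\|_V^2)$, and the conditional Cauchy--Schwarz inequality followed by integration over $Z_1$ and Jensen's inequality for the concave map $t\mapsto\sqrt t$ gives
\[
\E\Big|\tfrac1N\sum_{m=2}^N W_m\Big|\le\Big(\tfrac{3K_h^2}{N}\big(1+2\E\|Z_1\|_V^2\big)\Big)^{1/2}\le\tilde c\,\big(1+(\E\|Z_1\|_V^2)^{1/2}\big)N^{-1/2},
\]
where the last step uses $1+2a\le2(1+a)$ and $\sqrt{1+a}\le1+\sqrt a$. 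Adding the three contributions yields the lemma with a constant $\tilde c$ depending only on $K_h$.

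I do not expect a genuine obstacle here: the argument is a routine second-moment law-of-large-numbers estimate. The only points demanding care are (i) that $Z_1$ is one of the sample points, which forces the separate diagonal term $h(Z_1,Z_1)$ of order $N^{-1}$, and (ii) the conditioning step — the summands $h(Z_1,Z_m)$, $m\ge2$, are \emph{dependent} without conditioning (they all share $Z_1$) but become i.i.d.\ once $Z_1$ is fixed, which is precisely what makes the variance bound, and hence the $N^{-1/2}$ rate after Jensen, come out cleanly.
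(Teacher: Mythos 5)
Your proof is correct and complete: the decomposition isolating the diagonal term $h(Z_1,Z_1)$, the conditioning on $Z_1$ to make the summands i.i.d.\ and centred, the variance bound from the linear-growth hypothesis, and the Cauchy--Schwarz/Jensen steps all check out, and the resulting constant depends only on $K_h$ as required. Note that the paper does not actually prove this lemma --- it imports it verbatim from \cite[Lemma 2.1]{HX23e} --- so you have supplied the argument in full; it is the standard second-moment proof that the cited reference uses.
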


Next, we provide a uniform in time estimate for the second moment of \eqref{MVi}.
\begin{lem}\label{Umo} Assume {\bf(A)} with $K_b+K_\sigma<\frac{K_2}{2}$. Then  there exists a constant $c_0>0$ such that
$$\sup_{t\geq 0}\left(1+\{(P_t^\ast\mu_0)(|\cdot|^2)\}^{\frac{1}{2}}\right)\leq c_0(1+\{\mu_0(|\cdot|^2)\}^{\frac{1}{2}}).$$
\end{lem}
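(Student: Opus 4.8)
The plan is to control $m_2(t):=(P_t^\ast\mu_0)(|\cdot|^2)=\E|X_t^i|^2$ (we may assume $\mu_0\in\scr P_2(\R^d)$, as otherwise the right-hand side is $+\infty$ and there is nothing to prove) by applying It\^o's formula to $|X_t^i|^2$ along \eqref{MVi} and converting the partial dissipativity \eqref{pdi} of $b^{(0)}$ into a dissipative Gronwall inequality for $m_2$. As a preliminary, since \eqref{MVi} is well-posed in $\scr P_1(\R^d)$ the curve $t\mapsto\mu_t=\L_{X_t^i}$ is well-defined with $\mu_t(|\cdot|)<\infty$, so \eqref{MVi} is a genuine time-inhomogeneous It\^o SDE in the space variable with drift $b^{(0)}(x)+\int_{\R^d}b^{(1)}(x,y)\mu_t(\d y)$ and bounded diffusion; a standard a priori moment estimate for monotone SDEs (stopping at $\tau_n=\inf\{t:|X_t^i|\ge n\}$, using the one-sided bound recalled below, taking expectations and letting $n\to\infty$) then shows $m_2(t)<\infty$ for every $t$, with $m_2$ locally bounded.

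For the main step, It\^o's formula gives
$$\d|X_t^i|^2=\Big[2\langle X_t^i,b^{(0)}(X_t^i)\rangle+2\langle X_t^i,\int_{\R^d}b^{(1)}(X_t^i,y)\mu_t(\d y)\rangle+\beta d+\|\int_{\R^d}\tilde\sigma(X_t^i,y)\mu_t(\d y)\|_{HS}^2\Big]\d t+\d M_t,$$
where $M$ is a true martingale since $\tilde\sigma\tilde\sigma^\ast\le K_\sigma$ forces the diffusion coefficient to be bounded and $m_2$ is locally bounded. Taking expectations, I would estimate the three drift terms. First, \eqref{pdi} with $x_2=0$ gives $\langle x,b^{(0)}(x)\rangle\le\gamma(|x|)|x|+|x|\,|b^{(0)}(0)|$; since $\gamma(r)r=-K_2r^2$ for $r>2R$ and $\gamma(r)r$ is bounded on $[0,2R]$, this yields $2\langle x,b^{(0)}(x)\rangle\le-2K_2|x|^2+\varepsilon|x|^2+C_\varepsilon$ for any $\varepsilon>0$, hence a contribution of at most $-(2K_2-\varepsilon)m_2(t)+C_\varepsilon$. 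Second, Jensen's inequality together with $\tilde\sigma\tilde\sigma^\ast\le K_\sigma$ from \eqref{bslipsg} gives $\|\int_{\R^d}\tilde\sigma(X_t^i,y)\mu_t(\d y)\|_{HS}^2\le dK_\sigma$, a constant, so the interacting noise does not contribute to the growth. Third, writing $\E[\langle X_t^i,\int_{\R^d}b^{(1)}(X_t^i,y)\mu_t(\d y)\rangle]=\int_{\R^d}\int_{\R^d}\langle x,b^{(1)}(x,y)\rangle\mu_t(\d x)\mu_t(\d y)$ and using \eqref{lip1a} together with $(\mu_t(|\cdot|))^2\le m_2(t)$, this term is at most $4K_bm_2(t)+\varepsilon m_2(t)+C_\varepsilon$. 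Collecting these,
$$m_2(t)\le m_2(0)+\int_0^t\big[-(2K_2-4K_b-2\varepsilon)m_2(s)+C\big]\d s,\ \ t\ge0.$$

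Finally, I would fix $\varepsilon>0$ small enough that $2\lambda:=2K_2-4K_b-2\varepsilon>0$; this is possible because $K_b\le K_b+K_\sigma<\tfrac{K_2}{2}$, so $2K_2-4K_b>0$. Gronwall's lemma then gives $m_2(t)\le\e^{-2\lambda t}m_2(0)+\tfrac{C}{2\lambda}$, whence $\sup_{t\ge0}m_2(t)\le m_2(0)+\tfrac{C}{2\lambda}$, and $\sqrt{a+b}\le\sqrt a+\sqrt b$ yields $\sup_{t\ge0}(1+\sqrt{m_2(t)})\le\big(1+\sqrt{C/(2\lambda)}\big)\big(1+\sqrt{m_2(0)}\big)$, which is the claim with $c_0=1+\sqrt{C/(2\lambda)}$. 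The only point that needs some care is the bookkeeping in the interaction term: one must keep its $m_2(t)$-coefficient equal to $4K_b$ (through the product identity above and $(\mu_t(|\cdot|))^2\le m_2(t)$, rather than a crude Young splitting that would produce a large, possibly uncontrollable, constant in front of $m_2$), and one must handle the interacting noise through the boundedness $\tilde\sigma\tilde\sigma^\ast\le K_\sigma$ rather than through its Lipschitz constant; once this is arranged the rest is a routine dissipative Gronwall argument.
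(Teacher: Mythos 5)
Your proof is correct and follows essentially the same route as the paper's: apply It\^o's formula to $|X_t^i|^2$, convert the partial dissipativity \eqref{pdi} of $b^{(0)}$ and the Lipschitz bound \eqref{lip1a} on $b^{(1)}$ into a differential inequality $m_2'(t)\le -\lambda m_2(t)+C$ with $\lambda>0$, and conclude by Gronwall. The only (harmless) difference is your treatment of the interacting noise: you use the uniform bound $\tilde\sigma\tilde\sigma^\ast\le K_\sigma$ so that the diffusion contributes only the constant $dK_\sigma$ and you need just $K_b<\tfrac{K_2}{2}$, whereas the paper uses the Lipschitz growth of $\tilde\sigma$ and cancels the resulting $\mu_t(|\cdot|^2)-|x|^2$ terms in expectation, arriving at the condition $K_2-2K_\sigma-2K_b>0$; both are implied by the hypothesis $K_b+K_\sigma<\tfrac{K_2}{2}$.
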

\begin{proof}
By {\bf(A)}, we can find a constant $C_0>0$ such that
\begin{align*}&2\<x,b^{(0)}(x)\>+2\left\<x,\int_{\R^d}b^{(1)}(x,y)\mu_t(\d y)\right\>+\beta d+\left\|\int_{\R^d}\tilde{\sigma}(x,y)\mu_t(\d y)\right\|_{HS}^2\\
&\leq (2K_1+2K_2)|x|^21_{\{|x|\leq 2R\}}-2K_2|x|^2+2K_\sigma|x|^2+2K_\sigma\mu_t(|\cdot|^2)\\
&+2\<x,b^{(0)}(0)\>+\beta d+2\sqrt{2K_\sigma}\|\tilde{\sigma}(0,0)\|_{HS}(|x|+\mu_t(|\cdot|))+\|\tilde{\sigma}(0,0)\|_{HS}^2\\
&+ 2|x|K_b(|x|+\mu_t(|\cdot|))+2|x||b^{(1)}(0,0)|\\
&\leq (2K_1+2K_2)4R^2+\beta d+\|\tilde{\sigma}(0,0)\|_{HS}^2-(2K_2-4K_\sigma-4K_b)|x|^2\\
&+2|x|(|b^{(0)}(0)|+2\sqrt{2K_\sigma}\|\tilde{\sigma}(0,0)\|_{HS}+|b^{(1)}(0,0)|)\\
&+ 2\sqrt{2K_\sigma}\|\tilde{\sigma}(0,0)\|_{HS}\{\mu_t(|\cdot|)-|x|\}\\
&-2K_\sigma|x|^2+2K_\sigma\mu_t(|\cdot|^2)-K_b|x|^2+ K_b\mu_t(|\cdot|)^2\\
&\leq C_0-(K_2-2K_\sigma-2K_b)|x|^2+ 2\sqrt{2K_\sigma}\|\tilde{\sigma}(0,0)\|_{HS}\{\mu_t(|\cdot|)-|x|\}\\
&-(2K_\sigma+K_b)\{|x|^2-\mu_t(|\cdot|^2)\},\ \ x\in\R^d.
\end{align*}
This together with the It\^{o} formula gives
\begin{align*}
\d |X_t^{1}|^2&\leq C_0\d t- (K_2-2K_\sigma-2K_b)|X_t^{1}|^2\d t\\
&+2\sqrt{2K_\sigma}\|\tilde{\sigma}(0,0)\|_{HS}\{\mu_t(|\cdot|)-|X_t^1|\}\d t\\
&-(2K_\sigma+K_b)\{|X_t^1|^2-\mu_t(|\cdot|^2)\}\d t+\d \bar{M}_t,~t\ge0
\end{align*}
for some martingale $\bar{M}_t$.
Combining this with $K_b+K_\sigma<\frac{K_2}{2}$, we conclude that there exists a constant $C>0$ such that
$$\E|X_t^1|^{2}\leq C(1+\E|X_0^1|^2),\ \ t\geq 0.$$
In view of $\L_{X_t^1}=P_t^\ast\mu_0$, the proof is completed .
\end{proof}
As stated in the Introduction, to complete the proof of Theorem \ref{POCin}, it is sufficient to estimate
 $\W_1( ((P_{t}^\mu)^{\otimes N})^\ast\mu_0^N,(P_t^\ast\mu_0)^{\otimes N})$ and $\W_1((P_t^N)^\ast\mu_0^N, ((P_{t}^\mu)^{\otimes N})^\ast\mu_0^N)$, which will be provided in the following two lemmas respectively. For simplicity, set $\mu_t=P_t^\ast\mu_0$.
\begin{lem}\label{st1} Assume {\bf(A)} with
\begin{align}\label{kbs}K_b+K_\sigma<\frac{4\beta^2}{K_2\delta^2}
\end{align}
for $\delta$ be in \eqref{pac}.
Then for any exchangeable $\mu_0^N\in\scr P_1((\R^d)^N)$ and $\mu_0\in\scr P_1(\R^d)$, it holds
$$\W_1( ((P_{t}^\mu)^{\otimes N})^\ast\mu_0^N,(P_t^\ast\mu_0)^{\otimes N})\leq c_E\e^{-\lambda_0 t}\W_1(\mu_0^N, \mu_0^{\otimes N}),\ \ t\geq 0$$
for $c_E$ and $\lambda_0$ be in \eqref{pac}.
\end{lem}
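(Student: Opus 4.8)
The plan is to estimate $\W_1\big(((P_t^\mu)^{\otimes N})^\ast\mu_0^N,(P_t^\ast\mu_0)^{\otimes N}\big)$ by running the two families of decoupled $N$-particle processes with a well-chosen coupling of their initial data and then controlling the one-particle discrepancy by reflection coupling. First I would note that $((P_t^\mu)^{\otimes N})^\ast\mu_0^N$ is the law of $(X_{0,t}^{i,\mu,Y^i})_{1\le i\le N}$ with $(Y^i)_{1\le i\le N}\sim\mu_0^N$, while $(P_t^\ast\mu_0)^{\otimes N}$ is the law of $(X_{0,t}^{i,\mu,\bar Y^i})_{1\le i\le N}$ with $\bar Y^i$ i.i.d.\ $\sim\mu_0$, and that all these SDEs are the \emph{same} time-inhomogeneous classical SDE \eqref{deSDE} (the coefficients depend on the fixed flow $\mu_t$, not on the empirical measure), differing only in the initial condition. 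So it suffices to couple $\mu_0^N$ with $\mu_0^{\otimes N}$ optimally for $\W_1$ (componentwise this is a sum of $\R^d$-couplings because of the $\|\cdot\|_{1,1}$ structure), drive each pair by the \emph{same} Brownian motions $(W^i,B^i)$ except on the $W^i$-part where I use a reflection coupling in the relative coordinate $Z_t^i=X_{0,t}^{i,\mu,Y^i}-X_{0,t}^{i,\mu,\bar Y^i}$, and then sum the resulting per-component $\W_1$ bounds over $i$.

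Next I would carry out the It\^o--Tanaka estimate for $|Z_t^i|$. Since the diffusion parts coming from $\tilde\sigma$ are driven by the same $B^i$ with the same (measure-dependent, hence here $x$-argument-dependent) coefficient $\int\tilde\sigma(\cdot,y)\mu_t(\d y)$, their difference contributes a term controlled via \eqref{bslipsg}: the quadratic-variation-type contribution is bounded by $2K_\sigma|Z_t^i|^2$, which after dividing by $|Z_t^i|$ in the It\^o--Tanaka formula for $|Z_t^i|$ gives a term of order $K_\sigma|Z_t^i|$ — crucially \emph{not} singular, because the interacting-noise argument here is $\bar X^i$ vs.\ the decoupled process and both carry the \emph{same} limiting measure $\mu_t$, so we are in the ``good'' case alluded to in the Introduction rather than the problematic empirical-measure case. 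Similarly the $b^{(1)}$-difference contributes $K_b|Z_t^i|$ by \eqref{lip1a}, and the $b^{(0)}$-difference contributes $\gamma(|Z_t^i|)$ by the partially dissipative condition \eqref{pdi}. The reflection coupling of the $\sqrt\beta W^i$ part produces the genuine dissipativity: one applies the standard concave-function trick, composing $|Z_t^i|$ with the concave increasing $f$ built from $\exp\{\frac1{2\beta}\int_0^\cdot\gamma(v)\d v\}$ so that $f''\cdot 2\beta$ dominates $f'\cdot\gamma$, yielding a contraction rate. The quantity $\delta$ in \eqref{pac} is precisely $\int_0^\infty s\,\e^{\frac1{2\beta}\int_0^s\gamma}\d s$, and the bookkeeping must show that after the concave change of variables, $\E f(|Z_t^i|)$ decays at rate $\lambda_0=\frac{2\beta}{\delta}-\frac{K_2\delta}{2\beta}(K_b+K_\sigma)$ up to the distortion constant $c_E=\frac{K_2\delta}{2\beta}$ relating $f(r)$ and $r$; condition \eqref{kbs} is exactly what guarantees $\lambda_0>0$.

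Finally I would assemble: for each $i$, $\W_1(\L_{X_{0,t}^{i,\mu,Y^i}},\L_{X_{0,t}^{i,\mu,\bar Y^i}})\le \E|Z_t^i|\le c_E\e^{-\lambda_0 t}\,\E|Y^i-\bar Y^i|$, choosing the pair $(Y^i,\bar Y^i)$ from an optimal $\W_1$-coupling of $\mu_0^N$ and $\mu_0^{\otimes N}$; summing over $i=1,\dots,N$ and using $\sum_i\E|Y^i-\bar Y^i|=\W_1(\mu_0^N,\mu_0^{\otimes N})$ gives the claim. The main obstacle, and the step deserving the most care, is the rigorous Gronwall/concave-function argument that converts the pointwise It\^o--Tanaka differential inequality $\d f(|Z_t^i|)\le -\lambda_0 f(|Z_t^i|)\d t + (\text{martingale})$ into the exponential bound with the \emph{stated} constants $c_E$ and $\lambda_0$ — in particular matching the distortion of $f$, handling the local-time/non-smoothness at $0$ (using that the reflection term and the $\tilde\sigma$-term both vanish or are controlled there), and verifying that the $K_\sigma$-contribution enters with the same coefficient as $K_b$ so that \eqref{kbs} suffices. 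I also need the a priori integrability ($\mu_0\in\scr P_1$) to justify that all expectations are finite and the martingale terms have zero mean, which is where the exchangeability and the $\scr P_1$ hypotheses are used.
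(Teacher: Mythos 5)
Your proposal is correct and follows essentially the same route as the paper: synchronous coupling in the $B^i$-driven interacting-noise part (which is non-singular here precisely because both processes carry the same fixed flow $\mu_t$), reflection coupling in the $\sqrt\beta W^i$ part, the concave distance function $f$ built from $\e^{\frac{1}{2\beta}\int_0^{\cdot}\gamma}$ with the comparison $\frac{2\beta}{K_2}r\le f(r)\le\delta r$ yielding exactly the constants $c_E$ and $\lambda_0$, and summation over components under an optimal initial coupling. The only detail worth making explicit is that the reflection must be switched off at the coupling time $\tau=\inf\{t:|Z_t^i|=0\}$ (as in the paper via the indicator $1_{\{t\le\tau\}}$) so that the Itô--Tanaka estimate is only needed on $\{t<\tau\}$.
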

\begin{proof}
We adopt the technique of reflection coupling to complete the proof. Construct
\begin{equation*}\begin{split}
\d \tilde{X}_t^{i}&=b^{(0)}(\tilde{X}_t^i)\d t+\int_{\R^d}b^{(1)}(\tilde{X}_t^i,y)\mu_t(\d y)\d t+\sqrt{\beta}\d
W_t^i+\int_{\R^d}\tilde{\sigma}(\tilde{X}_t^{i},y)\mu_t(\d y)\d B_t^i,
\end{split}\end{equation*}
and
\begin{equation*}\begin{split}
\d  \hat{X}_t^{i}&=b^{(0)}(\hat{X}_t^{i})\d t+\int_{\R^d}b^{(1)}(\hat{X}_t^i,y)\mu_t(\d y)\d t\\
&+\sqrt{\beta}(I_{d\times d}-2\tilde{U}_t^{i}\otimes \tilde{U}_t^{i}1_{\{t\leq \tau\}})\d
W_t^i
+\int_{\R^d}\tilde{\sigma}(\hat{X}_t^{i},y)\mu_t(\d y)\d B_t^i,
\end{split}\end{equation*}
where  $\tilde{Z}_t^{i}=\tilde{X}_t^i-\hat{X}_t^{i}$, $\tau=\inf\{t\geq 0:|\tilde{Z}_t^{i}|=0\}$, $\tilde{U}_t^{i}=\frac{\tilde{Z}_t^{i}}{|\tilde{Z}_t^{i}|}1_{\{|\tilde{Z}_t^{i}|\neq 0\}}$, $\L_{(\tilde{X}_0^{i})_{1\leq i\leq N}}=\mu_0^{\otimes N}$ and $\L_{(\hat{X}_0^{i})_{1\leq i\leq N}}=\mu_0^N$.
By the It\^{o}-Tanaka formula, \eqref{bslipsg}-\eqref{pdi}, we have
\begin{align*}
\d |\tilde{Z}_t^{i}|&\leq  \gamma(|\tilde{Z}_t^{i}|)\d t+(K_b+K_\sigma)|\tilde{Z}_t^{i}|\d t\\
&+\left\<\int_{\R^d}\left[\tilde{\sigma}(\tilde{X}_t^{i},y)-\tilde{\sigma}(\hat{X}_t^{i},y)\right]\mu_t(\d y)\d B_t^i,\frac{\tilde{Z}_t^{i}}{|\tilde{Z}_t^{i}|}\right\>+ 2\sqrt{\beta}\left\<\frac{\tilde{Z}_t^{i}}{|\tilde{Z}_t^{i}|},\d W_t^i\right\>,\ \ t<\tau.
\end{align*}
Define
$$f(r)=\int_0^r\e^{-\frac{1}{2\beta}\int_0^u\gamma(v)\d v}\int_u^\infty s\e^{\frac{1}{2\beta}\int_0^s\gamma(v)\d v}\d s\d u,\ \ r\geq 0.$$
Then it is easy to see from \eqref{pac} that
$$f'(0)=\int_0^\infty s\e^{\frac{1}{2\beta^2}\int_0^s\gamma(v)\d v}\d s=\delta,$$
and
\begin{align}\label{sed}f''(r)=-\frac{1}{2\beta}\gamma(r)f'(r)-r.
\end{align}
By \cite[Page 1054]{W23}, we have
\begin{align}\label{fii}f''(r)\leq 0, \ \ r\geq 0,
\end{align}
and
\begin{align}\label{cop}\frac{2\beta}{K_2}r\leq f(r)\leq \delta r.
\end{align}
By It\^{o}'s formula and \eqref{fii}, we have
\begin{align}\label{itf}
\nonumber\d f(|\tilde{Z}_t^{i}|)&\leq f'(|\tilde{Z}_t^{i}|)\gamma(|\tilde{Z}_t^{i}|)\d t+f'(|\tilde{Z}_t^{i}|)(K_b+K_\sigma)|\tilde{Z}_t^{i}|\d t+2\beta f''(|\tilde{Z}_t^{i}|)\d t\\
&+f'(|\tilde{Z}_t^{i}|)\left\<\int_{\R^d} \left[\tilde{\sigma}(\tilde{X}_t^{i},y)-\tilde{\sigma}(\hat{X}_t^{i},y)\right]\mu_t(\d y)\d B_t^i,\frac{\tilde{Z}_t^{i}}{|\tilde{Z}_t^{i}|}\right\>\\
\nonumber&+f'(|\tilde{Z}_t^{i}|)2\sqrt{\beta}\left\<\frac{\tilde{Z}_t^{i}}{|\tilde{Z}_t^{i}|},\d W_t^i\right\>,\ \ t<\tau.
\end{align}
It follows from \eqref{sed} that
\begin{align*}
&f'(|\tilde{Z}_t^{i}|)\gamma(|\tilde{Z}_t^{i}|)+2\beta f''(|\tilde{Z}_t^{i}|)= -2\beta |\tilde{Z}_t^{i}|.
\end{align*}
This combined with $\|f'\|_\infty=f'(0)=\delta$, \eqref{cop} and \eqref{itf} gives
\begin{align*}
\d f(|\tilde{Z}_t^{i}|)
&\leq -\left\{\frac{2\beta}{\delta} -\frac{K_2\delta}{2\beta}(K_b+K_\sigma)\right\}f(|\tilde{Z}_t^{i}|)\d t\\
&+f'(|\tilde{Z}_t^{i}|)\left\<\int_{\R^d} \left[\tilde{\sigma}(\tilde{X}_t^{i},y)-\tilde{\sigma}(\hat{X}_t^{i},y)\right]\mu_t(\d y)\d B_t^i,\frac{\tilde{Z}_t^{i}}{|\tilde{Z}_t^{i}|}\right\>\\
\nonumber&+f'(|\tilde{Z}_t^{i}|)2\sqrt{\beta} \left\<\frac{\tilde{Z}_t^{i}}{|\tilde{Z}_t^{i}|},\d W_t^i\right\>, \ \ t<\tau.
\end{align*}
Recall that $\lambda_0=\frac{2\beta}{\delta} -\frac{K_2\delta}{2\beta}(K_b+K_\sigma)$ is given in \eqref{pac}. Then \eqref{kbs} implies $\lambda_0>0$. Hence, it follows that
\begin{align*}
&\E [\e^{\lambda_0 t}f(|\tilde{Z}_{t}^{i}|)|\F_0]=\E [\e^{\lambda_0 (t\wedge\tau)}f(|\tilde{Z}_{t\wedge \tau}^{i}|)1_{t<\tau}|\F_0]\leq \E [\e^{\lambda_0 (t\wedge\tau)}f(|\tilde{Z}_{t\wedge \tau}^{i}|)|\F_0]\leq f(|\tilde{Z}_0^{i}|).
\end{align*}
So, we have
\begin{align}\label{feg}
&\E [f(|\tilde{Z}_t^{i}|)|\F_0]\leq \e^{-\lambda_0 t}f(|\tilde{Z}_0^{i}|).
\end{align}
Recall $c_E=\frac{K_2\delta}{2\beta}$ is defined in \eqref{pac}. Then it holds $c_E\geq 1$ due to \eqref{cop}. \eqref{feg} together with \eqref{cop} implies that
$$\W_1(\L_{\tilde{X}_t^i|\F_0}, \L_{\hat{X}_t^i|\F_0})\leq c_E\e^{-\lambda_0 t}|X_0^{i,N}-X_0^{i}|. $$
Since both $(\tilde{X}_t^i)_{1\leq i\leq N}$ and $(\hat{X}_t^i)_{1\leq i\leq N}$ are independent under $\P^0$, we get
$$\W_1(\L_{(\tilde{X}_t^i)_{1\leq i\leq N}|\F_0}, \L_{(\hat{X}_t^i)_{1\leq i\leq N}|\F_0})\leq c_E\e^{-\lambda_0 t}\sum_{i=1}^N|X_0^{i,N}-X_0^{i}|.$$
Taking expectation first and then taking infimum in all $(X_0^i)_{1\leq i\leq N}$ and $(X_0^{i,N})_{1\leq i\leq N}$ satisfying $\L_{(X_0^{i,N})_{1\leq i\leq N}}=\mu_0^N$ and $\L_{(X_0^{i})_{1\leq i\leq N}}=\mu_0^{\otimes N}$, we derive
\begin{align*}\W_1( ((P_{t}^\mu)^{\otimes N})^\ast\mu_0^N,(P_t^\ast\mu_0)^{\otimes N})=\W_1(\L_{(\tilde{X}_t^i)_{1\leq i\leq N}}, \L_{(\hat{X}_t^i)_{1\leq i\leq N}})\leq c_E\e^{-\lambda_0 t}\W_1(\mu_0^N, \mu_0^{\otimes N}).
\end{align*}
Therefore, the proof is completed.
\end{proof}
\begin{lem}\label{st2} Assume {\bf (A)} and {\bf(A')} with $K_b+K_\sigma<\frac{K_2}{2}$. Let $\mu_0^N\in\scr P_{1}((\R^d)^N)$ be exchangeable and $\mu_0\in \scr P_{2}(\R^d)$. Then there exists a constant $c_L>0$ such that
\begin{align*}
&\W_1((P_t^N)^\ast\mu_0^N, ((P_{t}^\mu)^{\otimes N})^\ast\mu_0^N)\\
&\leq 3K_b\mathbf{c_G}\int_0^t\W_1((P_s^{N})^\ast\mu^N_0,(P_s^\ast\mu_0)^{\otimes N})\d s\\
&+3\sqrt{2}K_\sigma \mathbf{c_G}\sqrt{d}\int_0^t((t-s)\wedge1)^{-\frac{1}{2}}\W_1((P_s^{N})^\ast\mu^N_0,(P_s^\ast\mu_0)^{\otimes N})\d s\\
&+c_L\mathbf{c_G}\left(t+\int_0^t(s\wedge1)^{-\frac{1}{2}}\d s\right)\{1+\{\mu_0(|\cdot|^{2})\}^{\frac{1}{2}}\} NN^{-\frac{1}{2}},\ \ t\geq 0.
\end{align*}
\end{lem}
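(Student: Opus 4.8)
The plan is to use the Duhamel formula for the two semigroups on $(\R^d)^N$ highlighted in the Introduction, applied to a $1$-Lipschitz test function for the $\W_1$ distance, and then to split the resulting generator-difference into three pieces: a drift interaction term, a diffusion interaction term, and a fluctuation (law of large numbers) term. Concretely, let $F\in C_b^1((\R^d)^N)$ with $[F]_{1,1}\le 1$; by the Kantorovich dual formula \eqref{wetad} it suffices to bound $|((P_t^N)^\ast\mu_0^N)(F)-(((P_t^\mu)^{\otimes N})^\ast\mu_0^N)(F)|$ uniformly in such $F$. Denote by $P_{s,t}^{(N)}$ the semigroup of the true particle system \eqref{Inm} and by $(P_{s,t}^\mu)^{\otimes N}$ the semigroup of the $N$ independent decoupled SDEs \eqref{deSDE}; writing $\scr L_s^{(N)}$ and $\bigoplus_{i=1}^N \scr L_s^\mu$ for the respective (time-inhomogeneous) generators, the Duhamel formula gives
\begin{align*}
P_{0,t}^{(N)}F-(P_{0,t}^\mu)^{\otimes N}F=\int_0^t P_{0,s}^{(N)}\Big\{\big(\scr L_s^{(N)}-\textstyle\bigoplus_{i=1}^N\scr L_s^\mu\big)(P_{s,t}^\mu)^{\otimes N}F\Big\}\,\d s.
\end{align*}
Here I would first have to justify that $G:=(P_{s,t}^\mu)^{\otimes N}F$ is regular enough ($C_b^2$ in each block) to apply the generators and the Duhamel identity — this follows from {\bf(A')}, since the gradient estimate \eqref{gra} controls $\nabla_i G$ and $\nabla_i^2 G$ blockwise.

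The next step is to compute the generator difference explicitly. Since both generators share the part $\<b^{(0)},\nabla_i\>$ and the constant $\frac{\beta}{2}\Delta_i$, the difference acting on $G$ is, in block $i$,
\begin{align*}
\Big\<\frac1N\sum_{j=1}^N b^{(1)}(x^i,x^j)-\int_{\R^d}b^{(1)}(x^i,y)\mu_s(\d y),\nabla_i G\Big\>+\frac12\mathrm{tr}\Big[\Big(a_s^{(N),i}(x)-a_s^{\mu,i}(x^i)\Big)\nabla_i^2 G\Big],
\end{align*}
where $a_s^{(N),i}(x)=\frac1{N^2}\big(\sum_j\tilde\sigma(x^i,x^j)\big)\big(\sum_j\tilde\sigma(x^i,x^j)\big)^\ast$ and $a_s^{\mu,i}(x^i)=\big(\int\tilde\sigma(x^i,y)\mu_s(\d y)\big)\big(\int\tilde\sigma(x^i,y)\mu_s(\d y)\big)^\ast$. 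For each of these I would add and subtract the term in which the empirical law is replaced by $\mu_s$ evaluated along a coupling: that is, compare $\frac1N\sum_j\Phi(x^i,x^j)$ with $\int\Phi(x^i,y)\mu_s(\d y)$ via (i) a Lipschitz-in-the-second-argument estimate against an optimal $\W_1$ coupling between the empirical measure of $(x^j)$ and $\mu_s$, producing the factor $\W_1((P_s^N)^\ast\mu_0^N,(P_s^\ast\mu_0)^{\otimes N})$ after taking $P_{0,s}^{(N)}$-expectation and summing over $i$ (exchangeability makes all blocks equal, giving the factor $N$), and (ii) the genuine fluctuation $\frac1N\sum_j\Phi(X_s^i,\bar X_s^j)-\int\Phi(X_s^i,y)\mu_s(\d y)$ with $(\bar X_s^j)$ i.i.d.\ $\sim\mu_s$, which is controlled in $L^1$ by the quantitative law of large numbers promised in Section 4, yielding the $N^{-1/2}$ rate times $\{1+\mu_0(|\cdot|^2)^{1/2}\}$ — the second-moment bound coming from the uniform-in-time moment estimate for \eqref{MVi}, also from Section 4. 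The gradient estimate \eqref{gra} with $\eta=1$ contributes $|\nabla_i G|\le \mathbf{c_G}$ for the drift term (hence the clean factor $3K_b\mathbf{c_G}$ after using $[F]_{1,1}\le1$, Lipschitz constant $K_b$ of $b^{(1)}$ in each argument, and the elementary bound $|u|^2-|v|^2$-type splitting that produces the numerical constant $3$) and $|\nabla_i^2 G|\le \mathbf{c_G}((t-s)\wedge1)^{-1/2}$ for the diffusion term, which is why the singular weight $((t-s)\wedge1)^{-1/2}$ and the $\sqrt2,\sqrt d$ factors (from $\|\cdot\|_{HS}$ versus operator norm, and from $\eqref{bslipsg}$ with the $\frac12$) appear; the fluctuation contribution of the diffusion part carries the companion weight $(s\wedge1)^{-1/2}$ (after a further splitting of the quadratic covariance difference), producing the $\big(t+\int_0^t(s\wedge1)^{-1/2}\d s\big)$ factor.

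The main obstacle is the diffusion term: unlike the drift, the difference $a_s^{(N),i}-a_s^{\mu,i}$ is quadratic in $\tilde\sigma$, so one must expand the square and handle a cross term whose control requires both the boundedness $\tilde\sigma\tilde\sigma^\ast\le K_\sigma$ and the Lipschitz bound \eqref{bslipsg}, and one must be careful that the $\frac1{N^2}$ normalization combines correctly with the sum over $i$ so that no spurious factor of $N$ survives beyond the single $N$ produced by exchangeability — in particular the diagonal $j=i$ term and the $O(1/N)$ self-interaction correction must be absorbed into the $N^{-1/2}$ fluctuation bound rather than the Wasserstein term. A secondary technical point is that the Duhamel formula was stated in the excerpt for $C_b^2$ functions and autonomous-looking coefficients, whereas here the decoupled coefficients depend on $s$ through $\mu_s$ and the particle coefficients are genuinely $N$-dimensional; I would first record a time-inhomogeneous, $(\R^d)^N$-valued version of \cite[(3a)]{MS} (a one-line consequence of $\frac{\d}{\d s}P_{0,s}^{(N)}(P_{s,t}^\mu)^{\otimes N}F=P_{0,s}^{(N)}(\scr L_s^{(N)}-\bigoplus_i\scr L_s^\mu)(P_{s,t}^\mu)^{\otimes N}F$ together with the forward and backward Kolmogorov equations, the latter being exactly \eqref{BKE}), and only then carry out the estimates above.
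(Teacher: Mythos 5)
Your proposal follows essentially the same route as the paper: the Duhamel formula is implemented there by combining the backward Kolmogorov equation \eqref{KOL} for $(P_{s,t}^\mu)^{\otimes N}$ with It\^o's formula along the particle system, the generator difference is split into exactly your drift term $B_s^i$ and covariance term $\Sigma_s^i$, each is decomposed into a Wasserstein part (via the $3K_b$- and $6\sqrt{2}K_\sigma$-Lipschitz continuity of $x\mapsto\sum_i|B^i(x)|$ and $x\mapsto\sum_i\|\Sigma_s^i(x)\|_{HS}$ together with Kantorovich duality) plus an i.i.d.\ fluctuation part handled by the quantitative law of large numbers (Lemma \ref{CTY}) and the uniform second-moment bound (Lemma \ref{Umo}), with the gradient estimate \eqref{gra} supplying the weights $\mathbf{c_G}$ and $\mathbf{c_G}((t-s)\wedge1)^{-1/2}$. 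The strategy and all the key ingredients match; no gap.
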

\begin{proof}
For any $x=(x^1,x^2,\cdots,x^N)\in(\R^{d})^N, s\geq 0, 1\leq i\leq N$ and $F\in C_b^2((\R^d)^N)$, define
\begin{align*}(\scr L_s^\mu)^{i}F(x)
&=\<b^{(0)}(x^i),\nabla_iF(x)\>+\left\<\int_{\R^d}b^{(1)}(x^i,y)\mu_s(\d y),\nabla_iF(x)\right\>\\
&+\frac{1}{2}\mathrm{tr}\left[\left(\beta I_{d\times d}+\int_{\R^d}\tilde{\sigma}(x^i,y)\mu_s(\d y)\left(\int_{\R^d}\tilde{\sigma}(x^i,y)\mu_s(\d y)\right)^\ast\right)\nabla_i^2F(x)\right],
\end{align*}
and
\begin{align*}(\scr L_s^\mu)^{\otimes N}F(x)
&=\sum_{i=1}^N(\scr L_s^\mu)^{i}F(x).
\end{align*}
By the same argument to derive \cite[(3.10)]{HYY} from \cite[(3.9)]{HYY}, we deduce from \eqref{BKE} that
\begin{align}\label{KOL}\frac{\d (P_{s,t}^\mu)^{\otimes N} F}{\d s}=-(\scr L_s^\mu)^{\otimes N} (P^\mu_{s,t})^{\otimes N}F, \ \ 0\leq s\leq t, F\in C_b^2((\R^d)^N).
\end{align}
For any $x=(x^1,x^2,\cdots,x^N)\in(\R^d)^N$, let $$B^i_s(x)=\frac{1}{N}\sum_{m=1}^Nb^{(1)}(x^i,x^m) -\int_{\R^d}b^{(1)}(x^i,y)\mu_s(\d y),\ \ s\geq 0, 1\leq i\leq N,$$
and
      \begin{align*}\Sigma^i_s(x)&=\left(\frac{1}{N}\sum_{m=1}^N\tilde{\sigma}(x^i,x^m) \right) \left(\frac{1}{N}\sum_{m=1}^N\tilde{\sigma}(x^i,x^m)\right)^\ast\\
        &-\left(\int_{\R^d}\tilde{\sigma}(x^i,y)\mu_s(\d y)\right)\left(\int_{\R^d}\tilde{\sigma}(x^i,y)\mu_s(\d y)\right)^\ast,\ \ s\geq 0, 1\leq i\leq N.
        \end{align*}
 Combining \eqref{KOL} with It\^{o}'s formula, for any $t> 0$, $s\in[0,t]$ and $F\in C_b^2((\R^d)^N)$, we have
\begin{align*}
&\d [(P_{s,t}^\mu)^{\otimes N} F](X_s^{1,N},X_s^{2,N},\cdots,X_s^{N,N})\\
&=\left[-(\scr L_s^\mu)^{\otimes N} (P^\mu_{s,t})^{\otimes N}F\right](X_s^{1,N},X_s^{2,N},\cdots,X_s^{N,N})\d s\\
&+\sum_{i=1}^N\<b^{(0)}(X_s^{i,N}),\nabla _i[(P_{s,t}^\mu)^{\otimes N} F](X_s^{1,N},X_s^{2,N},\cdots,X_s^{N,N})\>\d s\\
&+\sum_{i=1}^N\left\<\frac{1}{N}\sum_{m=1}^Nb^{(1)}(X_s^{i,N},X_s^{m,N}),\nabla_i [(P_{s,t}^\mu)^{\otimes N} F](X_s^{1,N},X_s^{2,N},\cdots,X_s^{N,N})\right\>\d s\\
&+\frac{1}{2}\sum_{i=1}^N\mathrm{tr}\Bigg[\Bigg(\beta I_{d\times d}+\frac{1}{N}\sum_{m=1}^N\tilde{\sigma}(X_s^{i,N},X_s^{m,N})\left(\frac{1}{N}\sum_{m=1}^N \tilde{\sigma}(X_s^{i,N},X_s^{m,N})\right)^\ast\Bigg)\\
&\qquad\qquad\quad\times \nabla^2_i[(P_{s,t}^\mu)^{\otimes N} F](X_s^{1,N},X_s^{2,N},\cdots,X_s^{N,N})\Bigg]\d s+\d M_s\\
&=\sum_{i=1}^N\left\<B_s^i,\nabla_{i}[(P^\mu_{s,t})^{\otimes N}F]\right\>(X_s^{1,N},X_s^{2,N},\cdots,X_s^{N,N})\d s\\
&+\frac{1}{2}\sum_{i=1}^N\mathrm{tr}\left[(\Sigma_s^i\nabla^2_i[(P_{s,t}^\mu)^{\otimes N} F])(X_s^{1,N},X_s^{2,N},\cdots,X_s^{N,N})\right]\d s+\d M_s
\end{align*}
for some martingale $M_s$.
Integrating with respect to $s$ from $0$ to $t$ and taking expectation, for any $\eta\in(0,1]$, $F\in C_b^2((\R^d)^N)$ with $[F]_{1,1}\leq 1$, we arrive at
\begin{align}\label{DUH}
\nonumber&\int_{(\R^d)^N}F(x)\{(P_t^{N})^\ast\mu^N_0\}(\d x)-\int_{(\R^d)^N}\{(P_{t}^\mu)^{\otimes N} F\}(x)\mu^N_0(\d x)\\
\nonumber&=\int_0^t\sum_{i=1}^N\int_{(\R^d)^N}\left\<B^i_s(x),[\nabla_{i}(P^\mu_{s,t})^{\otimes N}F](x)\right\>\{(P_s^{N})^\ast\mu^N_0\}(\d x)\d s\\
\nonumber&+\frac{1}{2}\int_0^t\sum_{i=1}^N\int_{(\R^d)^N}\mathrm{tr}(\Sigma^i_s[\nabla_{i}^2(P^\mu_{s,t})^{\otimes N}F])(x)\{(P_s^{N})^\ast\mu^N_0\}(\d x)\d s\\
&\leq \mathbf{c_G}\int_0^t((t-s)\wedge 1)^{\frac{-1+1}{2}}\sum_{i=1}^N\int_{(\R^d)^N}|B^i_s(x)|\{(P_s^{N})^\ast\mu^N_0\}(\d x)\d s\\
\nonumber&+\frac{1}{2}\mathbf{c_G}\sqrt{d}\int_0^t((t-s)\wedge 1)^{-1+\frac{1}{2}}\sum_{i=1}^N\int_{(\R^d)^N}\|\Sigma^i_s(x)\|_{HS}\{(P_s^{N})^\ast\mu^N_0\}(\d x)\d s\\
\nonumber&=:I_1+I_{2},
    \end{align}
here we used the fact
\begin{align*}|\nabla^j_{i}(P^\mu_{s,t})^{\otimes N}F|\leq \mathbf{c_G}((t-s)\wedge 1)^{\frac{-j+1}{2}},\ \ j=1,2, 1\leq i\leq N,[F]_{1,1}\leq 1,
\end{align*}
which is not difficult to be derived from \eqref{gra}. Next, we estimate $I_1$ and $I_2$ respectively.
Observe that {\bf(A)} implies
$$\sum_{i=1}^N|B^i(x)|-\sum_{i=1}^N|B^i(\tilde{x})|\leq 3K_b\|x-\tilde{x}\|_{1,1},\ \ x,\tilde{x} \in(\R^d)^N,$$
and
$$\sum_{i=1}^N\|\Sigma^i_s(x)\|_{HS}-\sum_{i=1}^N\|\Sigma_s^i(\tilde{x})\|_{HS}\leq 6\sqrt{2}K_\sigma\|x-\tilde{x}\|_{1,1}\ \ x,\tilde{x} \in(\R^d)^N.$$
This together with \eqref{wetad} yields
\begin{align}\label{bdi}\nonumber&I_1=\mathbf{c_G}\int_0^t((t-s)\wedge 1)^{\frac{-1+1}{2}}\sum_{i=1}^N\int_{(\R^d)^N}|B^i_s(x)|\{(P_s^{N})^\ast\mu^N_0\}(\d x)\d s\\
\nonumber&=\mathbf{c_G}\int_0^t((t-s)\wedge 1)^{\frac{-1+1}{2}}\sum_{i=1}^N\int_{(\R^d)^N}|B^i_s(x)|\{(P_s^{N})^\ast\mu^N_0-(P_s^\ast\mu_0)^{\otimes N}\}(\d x)\d s\\
&+\mathbf{c_G}\int_0^t((t-s)\wedge 1)^{\frac{-1+1}{2}}\sum_{i=1}^N\int_{(\R^d)^N}|B^i_s(x)|\{(P_s^\ast\mu_0)^{\otimes N}\}(\d x)\d s\\
\nonumber&\leq 3K_b\mathbf{c_G}\int_0^t((t-s)\wedge 1)^{\frac{-1+1}{2}}\W_1((P_s^{N})^\ast\mu^N_0,(P_s^\ast\mu_0)^{\otimes N})\d s\\
\nonumber&+\mathbf{c_G}\int_0^t((t-s)\wedge 1)^{\frac{-1+1}{2}}\sum_{i=1}^N\int_{(\R^d)^N}|B^i_s(x)|\{(P_s^\ast\mu_0)^{\otimes N}\}(\d x)\d s,
\end{align}
and similarly,
\begin{align}\label{sdi}\nonumber&I_2=\frac{1}{2}\mathbf{c_G}\sqrt{d}\int_0^t((t-s)\wedge 1)^{-1+\frac{1}{2}}\sum_{i=1}^N\int_{(\R^d)^N}\|\Sigma^i_s(x)\|_{HS}\{(P_s^{N})^\ast\mu^N_0\}(\d x)\d s\\
&\leq 3\sqrt{2}K_\sigma \mathbf{c_G}\sqrt{d}\int_0^t((t-s)\wedge 1)^{-1+\frac{1}{2}}\W_1((P_s^{N})^\ast\mu^N_0,(P_s^\ast\mu_0)^{\otimes N})\d s\\
\nonumber&+\frac{1}{2}\mathbf{c_G}\sqrt{d}\int_0^t((t-s)\wedge 1)^{-1+\frac{1}{2}}\sum_{i=1}^N\int_{(\R^d)^N}\|\Sigma^i_s(x)\|_{HS}\{(P_s^\ast\mu_0)^{\otimes N}\}(\d x)\d s.
\end{align}
By Lemma \ref{CTY} and Lemma \ref{Umo}, we can find a constant $c_L>0$ such that
\begin{align}\label{ubi}\nonumber&\mathbf{c_G}\int_0^t((t-s)\wedge 1)^{\frac{-1+1}{2}}\sum_{i=1}^N\int_{(\R^d)^N}|B^i_s(x)|\{(P_s^\ast\mu_0)^{\otimes N}\}(\d x)\d s\\
&+\frac{1}{2}\mathbf{c_G}\sqrt{d}\int_0^t((t-s)\wedge 1)^{-1+\frac{1}{2}}\sum_{i=1}^N\int_{(\R^d)^N}\|\Sigma^i_s(x)\|_{HS}\{(P_s^\ast\mu_0)^{\otimes N}\}(\d x)\d s\\
\nonumber&\leq c_L\mathbf{c_G}\left(\int_0^t(s\wedge1)^{\frac{-1+1}{2}}\d s+\int_0^t(s\wedge1)^{-1+\frac{1}{2}}\d s\right)\{1+\{\mu_0(|\cdot|^{2})\}^{\frac{1}{2}}\} NN^{-\frac{1}{2}},\ \ t\geq 0.
\end{align}
 Substituting \eqref{bdi}-\eqref{ubi} into \eqref{DUH}, we arrive at
\begin{align*}
&\W_1((P_t^N)^\ast\mu_0^N, ((P_{t}^\mu)^{\otimes N})^\ast\mu_0^N)\\
&\leq 3K_b\mathbf{c_G}\int_0^t((t-s)\wedge1)^{\frac{-1+1}{2}}\W_1((P_s^{N})^\ast\mu^N_0,(P_s^\ast\mu_0)^{\otimes N})\d s\\
&+3\sqrt{2}K_\sigma \mathbf{c_G}\sqrt{d}\int_0^t((t-s)\wedge1)^{-1+\frac{1}{2}}\W_1((P_s^{N})^\ast\mu^N_0,(P_s^\ast\mu_0)^{\otimes N})\d s\\
&+c_L\mathbf{c_G}\left(\int_0^t(s\wedge1)^{\frac{-1+1}{2}}\d s+\int_0^t(s\wedge1)^{-1+\frac{1}{2}}\d s\right)\{1+\{\mu_0(|\cdot|^{2})\}^{\frac{1}{2}}\} NN^{-\frac{1}{2}},\ \ t\geq 0.
\end{align*}
So, we complete the proof.
\end{proof}
\subsection{Proof of Theorem \ref{POCin}}
With Lemma \ref{st1} and Lemma \ref{st2} in hand, we are in the position to complete the proof of Theorem \ref{POCin}.

\begin{proof}[Proof of Theorem \ref{POCin}]
Combining Lemma \ref{st1}, Lemma \ref{st2} with the triangle inequality, we arrive at
\begin{align}\label{trian}
\nonumber&\W_1((P_t^{N})^\ast\mu^N_0,(P_t^\ast\mu_0)^{\otimes N})\\
\nonumber&\leq c_E\e^{-\lambda_0 t}\W_1(\mu^N_0,\mu_0^{\otimes N})+3K_b\mathbf{c_G}\int_0^t\W_1((P_s^{N})^\ast\mu^N_0,(P_s^\ast\mu_0)^{\otimes N})\d s\\
&+3\sqrt{2}K_\sigma \mathbf{c_G}\sqrt{d}\int_0^t((t-s)\wedge1)^{-\frac{1}{2}}\W_1((P_s^{N})^\ast\mu^N_0,(P_s^\ast\mu_0)^{\otimes N})\d s\\
\nonumber&+c_L\mathbf{c_G}\left(t+\int_0^t(s\wedge1)^{-\frac{1}{2}}\d s\right)\{1+\{\mu_0(|\cdot|^{2})\}^{\frac{1}{2}}\} NN^{-\frac{1}{2}},\ \ t\geq 0.
\end{align}
In view of
\begin{align*}&3K_b\mathbf{c_G}\int_0^t\W_1((P_s^{N})^\ast\mu^N_0,(P_s^\ast\mu_0)^{\otimes N})\d s\\
&\leq 3K_b\mathbf{c_G}\sqrt{t}\int_0^t(t-s)^{-\frac{1}{2}}\W_1((P_s^{N})^\ast\mu^N_0,(P_s^\ast\mu_0)^{\otimes N})\d s,
\end{align*}
and
\begin{align*}
&3\sqrt{2}K_\sigma \mathbf{c_G}\sqrt{d}\int_0^t((t-s)\wedge1)^{-\frac{1}{2}}\W_1((P_s^{N})^\ast\mu^N_0,(P_s^\ast\mu_0)^{\otimes N})\d s\\
&=3\sqrt{2}K_\sigma \mathbf{c_G}\sqrt{d}\int_0^t\frac{((t-s)\wedge1)^{-\frac{1}{2}}}{(t-s)^{-\frac{1}{2}}}(t-s)^{-\frac{1}{2}}\W_1((P_s^{N})^\ast\mu^N_0,(P_s^\ast\mu_0)^{\otimes N})\d s\\
&\leq 3\sqrt{2}K_\sigma \mathbf{c_G}\sqrt{d}(1\vee\sqrt{t})\int_0^t(t-s)^{-\frac{1}{2}}\W_1((P_s^{N})^\ast\mu^N_0,(P_s^\ast\mu_0)^{\otimes N})\d s,
\end{align*}
we obtain from \eqref{trian} that
\begin{align*}
&\W_1((P_t^{N})^\ast\mu^N_0,(P_t^\ast\mu_0)^{\otimes N})\\
&\leq c_E\e^{-\lambda_0 t}\W_1(\mu^N_0,\mu_0^{\otimes N})+c_L\mathbf{c_G}\left(t+\int_0^t(s\wedge1)^{-\frac{1}{2}}\d s\right)\{1+\{\mu_0(|\cdot|^{2})\}^{\frac{1}{2}}\} NN^{-\frac{1}{2}}\\
&+3\sqrt{2}\sqrt{d}(K_b+K_\sigma)\mathbf{c_G} (1\vee\sqrt{t})\int_0^t(t-s)^{-\frac{1}{2}}\W_1((P_s^{N})^\ast\mu^N_0,(P_s^\ast\mu_0)^{\otimes N})\d s.
\end{align*}
This implies that for any $t>0$,
\begin{align*}
\nonumber&\sup_{s\in[0,t]}\e^{\lambda_0 s}\W_1((P_s^{N})^\ast\mu^N_0,(P_s^\ast\mu_0)^{\otimes N})\\
\nonumber&\leq c_E\W_1(\mu^N_0,\mu_0^{\otimes N})+c_L\mathbf{c_G}\e^{\lambda_0 t}\left(t+\int_0^t(s\wedge1)^{-\frac{1}{2}}\d s\right)\{1+\{\mu_0(|\cdot|^{2})\}^{\frac{1}{2}}\} NN^{-\frac{1}{2}}\\
&+6\sqrt{2}\sqrt{d}(K_b+K_\sigma)\mathbf{c_G} (1\vee\sqrt{t})\e^{\lambda_0 t}t^{\frac{1}{2}}\sup_{s\in[0,t]}\e^{\lambda_0 s}\W_1((P_s^{N})^\ast\mu^N_0,(P_s^\ast\mu_0)^{\otimes N}).
\end{align*}
So, when $6\sqrt{2}\sqrt{d}(K_b+K_\sigma)\mathbf{c_G} (1\vee\sqrt{t})\e^{\lambda_0 t}t^{\frac{1}{2}}<1$, we conclude that
\begin{align}\label{cyk}
\nonumber&\W_1((P_t^{N})^\ast\mu^N_0,(P_t^\ast\mu_0)^{\otimes N})\\
&\leq \e^{-\lambda_0 t}c_E\left(1-6\sqrt{2}\sqrt{d}(K_b+K_\sigma)\mathbf{c_G} (1\vee\sqrt{t})\e^{\lambda_0 t}t^{\frac{1}{2}}\right)^{-1}\W_1(\mu^N_0,\mu_0^{\otimes N})\\
\nonumber&+\e^{-\lambda_0 t}\left(1-6\sqrt{2}\sqrt{d}(K_b+K_\sigma)\mathbf{c_G} (1\vee\sqrt{t})\e^{\lambda_0 t}t^{\frac{1}{2}}\right)^{-1}c_L\mathbf{c_G}\e^{\lambda_0 t}\left(t+\int_0^t(s\wedge1)^{-\frac{1}{2}}\d s\right)\\
\nonumber&\qquad\quad\times\{1+\{\mu_0(|\cdot|^{2})\}^{\frac{1}{2}}\} NN^{-\frac{1}{2}}.
\end{align}
Recall that $G(\mathbf{a},t)$ and $\lambda_0$ are defined in \eqref{Gkt} and \eqref{pac} respectively. Note that for any $\mathbf{a}>0$, $G(\mathbf{a},\cdot)$ is continuous on the set $\{t\geq 0: 6\sqrt{2}\sqrt{d}\mathbf{a}\mathbf{c_G} (1\vee\sqrt{t})\e^{\lambda_0 t}t^{\frac{1}{2}}<1\}$.
Consequently, when $K_b+K_\sigma\in(0,\kappa_0)$ with $\kappa_0$ defined in \eqref{ka0}, we can find $\hat{t}>0$ such that
\begin{align*}\alpha:=\e^{-\lambda_0 \hat{t}}c_E\left(1-6\sqrt{2}\sqrt{d}(K_b+K_\sigma)\mathbf{c_G} (1\vee\sqrt{\hat{t}})\e^{\lambda_0 \hat{t}}\hat{t}^{\frac{1}{2}}\right)^{-1}<1.
\end{align*}
Hence, we derive from \eqref{cyk} that there exist constants $\tilde{c}_0>0$ and $c_3>0$ such that
\begin{align}\label{CMYa1}&\W_1((P_{\hat{t}}^{N})^\ast\mu^N_0,(P_{\hat{t}}^\ast\mu_0)^{\otimes N})\leq \alpha\W_1(\mu_0^N,\mu_0^{\otimes N})+\tilde{c}_0\{1+\{\mu_0(|\cdot|^{2})\}^{\frac{1}{2}}\}NN^{-\frac{1}{2}},
\end{align}
and
\begin{align}\label{w1f}\sup_{t\in[0,\hat{t}]}\W_1((P_t^{N})^\ast\mu^N_0,(P_t^\ast\mu_0)^{\otimes N})\leq c_3\W_1(\mu^N_0,\mu_0^{\otimes N})+c_3\{1+\{\mu_0(|\cdot|^{2})\}^{\frac{1}{2}}\}NN^{-\frac{1}{2}}.
\end{align}
By \eqref{CMYa1}-\eqref{w1f}, the semigroup property $(P_{t+s}^{N})^\ast=(P_{t}^{N})^\ast(P_{s}^{N})^\ast$ as well as $P_{t+s}^\ast=P_{t}^\ast P_{s}^\ast$ and $\sup_{t\geq 0}\mu_t(|\cdot|^2)<\infty$ due to Lemma \ref{Umo}, we can find constants $c,\lambda>0$ such that
\begin{align}\label{CMYan}\W_1((P_t^{N})^\ast\mu^N_0,(P_t^\ast\mu_0)^{\otimes N})\leq c\e^{-\lambda t}\W_1(\mu_0^N,\mu_0^{\otimes N})+c\{1+\{\mu_0(|\cdot|^{2})\}^{\frac{1}{2}}\}NN^{-\frac{1}{2}},\ \ t\geq 0.
\end{align}
In fact, for any $t\geq \hat{t}$, let $\lfloor\frac{t}{\hat{t}}\rfloor$ be the integer part of $\frac{t}{\hat{t}}$. It follows from \eqref{CMYa1}-\eqref{w1f}, Lemma \ref{Umo} and the semigroup property $(P_{t+s}^{N})^\ast=(P_{t}^{N})^\ast(P_{s}^{N})^\ast$, $P_{t+s}^\ast=P_{t}^\ast P_{s}^\ast$ that
\begin{align}\label{CMYa2}\nonumber&\W_1((P_{t}^{N})^\ast\mu^N_0,(P_{t}^\ast\mu_0)^{\otimes N})\\
&\leq \alpha^{\lfloor\frac{t}{\hat{t}}\rfloor} \W_1((P_{t-\hat{t}\lfloor\frac{t}{\hat{t}}\rfloor}^{N})^\ast\mu^N_0,(P_{t-\hat{t}\lfloor\frac{t}{\hat{t}}\rfloor}^\ast\mu_0)^{\otimes N})+\sum_{\ell=1}^{\lfloor\frac{t}{\hat{t}}\rfloor}\alpha^{\ell-1}\bar{C}\{1+\{\mu_0(|\cdot|^{2})\}^{\frac{1}{2}}\} NN^{-\frac{1}{2}}\\
\nonumber&\leq c_3\alpha^{\lfloor\frac{t}{\hat{t}}\rfloor}\W_1(\mu^N_0,\mu_0^{\otimes N})+\left(c_3+\frac{\bar{C}}{1-\alpha}\right)\{1+\{\mu_0(|\cdot|^{2})\}^{\frac{1}{2}}\}NN^{-\frac{1}{2}},\ \ t\geq\hat{t}
\end{align}
for some constant $\bar{C}>0$. Combining \eqref{CMYa2} with \eqref{w1f}, we deduce \eqref{CMYan} for $\lambda=-\hat{t}^{-1}\log\alpha$.
Finally, we derive \eqref{CMYa} by combining \eqref{CMYan} and the fact
$$\W_1((P_t^{k})^\ast\mu^N_0,(P_t^\ast\mu_0)^{\otimes k})\leq \frac{k}{N}\W_1((P_t^{N})^\ast\mu^N_0,(P_t^\ast\mu_0)^{\otimes N}), \ \ 1\leq k\leq N.$$
Hence, the proof is completed.
\end{proof}

\end{document}